\documentclass{amsart}
\usepackage{color}
\usepackage{amsmath, amssymb,enumerate}
\usepackage{cleveref}

\def\l{\left}
\def\r{\right}

\def\mR{\mathbb{R}}
\def\mRd{\mathbb{R}^d}
\def\ve{\varepsilon}
\def\MA{Monge-Amp\`{e}re }
\newcommand{\wt}[1]{\widetilde{#1}}

\DeclareMathOperator{\diam}{diam}
\DeclareMathOperator{\supp}{supp}

\usepackage{soul}

\newtheorem{Theorem}{Theorem}[section]
\newtheorem{Lemma}[Theorem]{Lemma}
\newtheorem{Proposition}[Theorem]{Proposition}
\newtheorem{Corollary}[Theorem]{Corollary}
\newtheorem{remark}[Theorem]{Remark}

\newtheorem{Assumption}[Theorem]{Assumption}



\usepackage{algpseudocode,algorithm,algorithmicx}

\algrenewcommand\algorithmicrequire{\textbf{Input:}}
\algrenewcommand\algorithmicensure{\textbf{Output:}}


\numberwithin{equation}{section}

\begin{document}

\title[Error Estimates for Optimal Transport]{Quantitative Stability and Error Estimates for
  Optimal Transport Plans}

\author{Wenbo Li}
\address[Wenbo Li]{Department of Mathematics, The University of Tennessee, Knoxville, Tennessee 37996}
\email[Wenbo Li]{wli50@utk.edu}

\author{Ricardo H. Nochetto}
\address[Ricardo H. Nochetto]{Department of Mathematics and Institute for Physical Science and Technology, University of Maryland, College Park, Maryland 20742}
\email[Ricardo H. Nochetto]{rhn@umd.edu}
\thanks{R.H. Nochetto was partially supported by the NSF Grants DMS-1411808 and
  DMS-1908267. W. Li was partially supported by NSF Grant DMS-1411808 and the Patrick and Marguerite Sung Fellowship in Mathematics of the University of Maryland.}

\date{Draft version of \today}

\maketitle

\begin{abstract}
Optimal transport maps and plans between two absolutely continuous measures $\mu$ and $\nu$ can be approximated by solving semi-discrete or fully-discrete optimal transport problems. These two problems ensue from approximating $\mu$ or both $\mu$ and $\nu$ by Dirac measures. Extending an idea from \cite{Gigli-Holder:11}, we characterize how transport plans change under perturbation of both $\mu$ and $\nu$. We apply this insight to prove error estimates for semi-discrete and fully-discrete algorithms in terms of errors solely arising from approximating measures. We obtain weighted $L^2$ error estimates for both types of algorithms with a convergence rate $O(h^{1/2})$. This coincides with the rate in \cite[Theorem 5.4]{Berman-Convergence:18} for semi-discrete methods, but the error notion is different.
\end{abstract}

\textbf{Key words.} Optimal transport, transport plans, quantitative stability, error estimates, \MA, Oliker-Prussner method, linear programming method.

\vspace{0.2cm}

\textbf{AMS subject classifications.} 65N15, 65N12; 35J96, 49Q22.


\section{Introduction}
The optimal transportation problem (OT), first proposed by Monge \cite{Monge-memoire:1781} and later generalized by Kantorovich \cite{Kantorovich-translocation:1942}, has been extensively studied from the theoretical point of view \cite{Brenier-Decomposition:87,Brenier-Least;89,caffarelli1992boundary,caffarelli1992regularity,caffarelli1996boundary,Gigli-Holder:11,Santambrogio-Optimal:15,Villani-Optimal:08}.
It has a wide variety of applications in economics \cite{Chiappori-Hedonic;10, Beiglbock-Model;13}, fluid mechanics \cite{Brenier-Least;89}, meteorology \cite{Cullen-Applications;99, Cullen-Fully;03}, image processing \cite{Rubner-Earth;00, Rabin-Regularization;10}, transportation \cite{Xia-Optimal;03, Carlier-Variational;05}, and optics design \cite{Gutierrez-Refractor;09, Prins-Monge;14}. In this section, we briefly introduce the basic theory and numerical methods of OT, and point out our contribution to the numerical analysis of OT in this paper.

\subsection{Introduction to Optimal Transport}\label{SS:OT-IntroOT}
If $X, Y$ are subdomains of $\mRd$ and $\mu \in P(X), \nu \in P(Y)$ are given probability measures, the Monge formulation of OT is to find an optimal map $T: X \to Y$
which minimizes the transport cost, i.e.
\begin{equation} \label{E:Monge}
\inf_{T: T_{\#}\mu = \nu} \; \int_{X} c\l(x,T(x)\r) \; d\mu(x),
\end{equation}
where the cost function $c(x,y): X \times Y \to [0,\infty)$ is given. Hereafter, $T_{\#}\mu$ denotes the push-forward of measure $\mu$ through $T$, namely $T_{\#}\mu = \nu$ means that for any measurable set $A \subset Y$, we have $\nu(A) = \mu(T^{-1}(A))$, or equivalently
\begin{equation}\label{eq:change-variables}
\int_X \phi(T(x)) \, d\mu(x) = \int_Y \phi(y) \, d\nu(y)
\end{equation}
for all continuous functions $\phi:Y\to\mathbb{R}$.
Since this problem is difficult to study, and sometimes the optimal map does not exist, Kantorovich \cite{Kantorovich-translocation:1942} generalized the notion of transport map, and considered the following problem:
\begin{equation} \label{E:Kantorovich}
\inf_{\gamma \in \Pi(\mu,\nu)} \; \int_{X \times Y} c(x,y) \; d\gamma(x,y),
\end{equation}
where $\Pi(\mu,\nu)$ is the set of transport plans between $\mu$ and $\nu$, namely
\[
\Pi(\mu,\nu) := \Big \{  \gamma \in P(X \times Y) \; : \;
(\pi_1)_{\#}\gamma = \mu,\; (\pi_2)_{\#}\gamma = \nu \Big \}.
\]
Here $\pi_1$ and $\pi_2$ are the projections defined as $\pi_1(x,y) = x, \pi_2(x,y) = y$. This definition implies that for $\gamma \in \Pi(\mu,\nu)$ and any measurable sets $A \subset X, B \subset Y$, we have $\gamma(A \times Y) = \mu(A), \gamma(X \times B) = \nu(B)$. For $X=Y=\mRd$ and $1 \le p < \infty$, let $P_p(\mRd) \subset P(\mRd)$ be the set of probability measures with bounded $p$ moment, i.e.
\begin{equation}\label{E:second-moment}
P_p(\mRd) := \left\{ \mu \in P_p(\mRd) \colon \int_{\mRd} |x|^p d\mu(x) < \infty \right\},
\end{equation}
which clearly contains those probabilities measures with bounded support. If $\mu, \nu \in P_p(\mRd)$, then $\gamma\in P_p(\mR^{2d})$ for all $\gamma\in\Pi(\mu,\nu)$ because
\begin{equation}\label{p-moment} 
\int_{\mR^{2d}} \big( |x|^p + |y|^p \big) \, d\gamma(x,y) =
\int_{\mR^d} |x|^p \, d\mu(x) + \int_{\mR^d} |y|^p \, d\nu(y) < \infty.
\end{equation}
Moreover, if $c(x,y) = |x-y|^p$, the Kantorovich problem \eqref{E:Kantorovich} always has a finite minimum value. In fact, this defines the well-known Wasserstein metric on $P_p(\mRd)$:
\[
W_p(\mu, \nu) := \l( \min_{\gamma \in \Pi(\mu,\nu)} \; \int_{\mRd \times \mRd} |x-y|^p \; d\gamma(x,y) \r)^{1/p} \quad \forall \, \mu, \nu \in P_p(\mRd).
\]
In addition, for quadratic cost, i.e. $p = 2$, \cite{Brenier-Decomposition:87} shows that there exists a unique optimal map $T = \nabla \varphi$ for a convex function $\varphi$ ($T$ is uniquely determined $\mu$-a.e.) provided that $\mu$ 
gives no mass to $(d-1)$-surfaces of class $C^2$. If $\mu, \nu$ are absolutely continuous measures with respect to Lebesgue measure with densities $f,g\ge0$, then
\[
\mu(S) = \int_S f(x) dx = \nu(\nabla\varphi(S)) = \int_{\nabla\varphi(S)} g(y) dy =
\int_S g(\nabla \varphi(x)) \det D^2 \varphi(x) dx,
\]
whence $\varphi$ satisfies the generalized Monge-Amp\`ere equation
\begin{equation}\label{MA}
g(\nabla\varphi(x)) \det D^2 \varphi(x) = f(x)
\quad \forall \,x\in X,
\end{equation}
with second type boundary condition $\nabla\varphi(X) = Y$ \cite[section 4.6]{Figalli-MA:2017} provided that the domains $X,Y \subset \mRd$ are chosen so that:
\begin{equation}\label{eq:def-XY}
X = \{ x \in \mRd: f(x)>0 \}, \quad Y = \{ y \in \mRd : g(y) > 0 \}.
\end{equation}
The optimal transport map $T$ induces an optimal transport plan $\gamma\in P(X,Y)$ given by $\gamma = (id,T)_{\#}\mu $, where $id$ denotes the identity map, namely
\[
\gamma(A) = \mu \big\{x\in X: \quad (x,T(x)) \in A  \big\}
\]
for all measurable sets $A \subset X\times Y$.
Similar results also hold for any $p \in (1,\infty)$, and for general costs $c(x,y)$ satisfying a twist condition \cite[Chapter 10]{Villani-Optimal:08}.

\subsection{Numerical Methods for OT and Our Contribution}\label{S:Numerical-OT}
In view of the numerous and diverse applications of OT, developing fast and sound numerical methods for OT is of paramount importance. Several algorithms do exist, ranging from those inspired by PDEs and variational techniques for absolutely continuous measures $\mu$ and $\nu$ \cite{Angenent-Minimizing:03, Benamou-Computational:00, Benamou-Minimal:17, Benamou-Numerical:14, Froese-Numerical:12, Hamfeldt-Viscosity:19, Lindsey-Optimal:17, Papadakis-Optimal:14} to those approximating one or both measures by Dirac masses and then solving the approximate OT \cite{Benamou-Iterative:15, Burkard-Assignment:12, Cuturi-Sinkhorn:13, Kitagawa-Convergence:16, Levy-Numerical:15, Merigot-Multiscale:11, Oberman-Efficient:15, Schmitzer-Sparse:16, Schmitzer-Hierarchical:13}. However, intrinsic difficulties make their numerical analysis far from complete.

In this paper, we develop stability and error analyses for quadratic costs that accounts for the effect of approximating measures $\mu$ and $\nu$ by Dirac masses. To this end, we extend the stability estimates of Gigli \cite{Gigli-Holder:11}, originally derived for optimal maps, to optimal plans $\gamma$. We do not develop new techniques to approximate $\gamma$.

If at least one of the two measures is discrete, then it is possible to solve for the exact transport map numerically without further approximations, which
includes semi-discrete algorithms \cite{Aurenhammer-Minkowski:98, Kitagawa-Convergence:16, Levy-Numerical:15,  Merigot-Multiscale:11} and fully-discrete methods \cite{Burkard-Assignment:12,Schmitzer-Sparse:16, Schmitzer-Hierarchical:13}. For these methods, since their errors solely come from approximating absolutely continuous measures with discrete measures, our results directly lead to error estimates. We also point out that an error estimate for a semi-discrete method was recently obtained by Berman \cite{Berman-Convergence:18}, but there are no such results for fully-discrete schemes.

We now describe our results. Let $X, Y$ be compact sets of $\mRd$ and $\mu, \nu$ be absolutely continuous measures with respect to the Lebesgue measure with densities $f\in L^1(X), g\in L^1(Y)$, respectively. Let
\[
\mu_h = \sum_{i=1}^N f_i \delta_{x_i}, \quad \nu_h = \sum_{j=1}^M g_j \delta_{y_j}
\]
be approximations of $\mu, \nu$ governed by a parameter $h>0$, which means the Wasserstein distances satisfy
\[
W_2(\mu,\mu_h)\le h,\quad W_2(\nu, \nu_h)\le h.
\]
To make this more concrete, we briefly introduce one way to obtain an approximation $\mu_h$ of $\mu$.
Choose $(x_i)_{i=1}^N$ such that $X \subset \cup_{i=1}^N B_h(x_i)$, where $B_r(x)$ denotes the open ball with radius $r$ centered at $x$. Then consider Voronoi tessellations: let
\begin{equation}\label{eq:Voronoi}
V_i := \Big\{x \in X: |x-x_i| = \min_{1\le j \le N} |x-x_j| \Big\}
\end{equation}
be the Voronoi cell for $x_i$ and
\[
f_i := \mu(V_i), \qquad \mu_h := \sum_{i=1}^N f_i \delta_{x_i}.
\]
Notice that we may assume $f_i > 0$ since otherwise for $f_i = 0$ we could just drop the Dirac measure at $x_i$. Define the map $U_h: X \to (x_i)_{i=1}^N$ such that $U_h(x) = x_i$ for all $x \in V_i$. It can be easily checked that this map $U_h$ is well defined and satisfies $|U_h(x) - x| \le h$ a.e. in $X$ because the intersection between $V_i$ and $V_j$ for $i \neq j$ is of zero Lebesgue measure and $X \subset \cup_{i=1}^N B_h(x_i)$; in particular $V_i \subset B_h(x_i)$ for all $1 \le i \le N$. Therefore $U_h$ is a transport map from $\mu$ to $\mu_h$ and thus
\begin{equation}\label{map-mu-muh}
W_2(\mu, \mu_h) \le \l( \int_{X} |x-U_h(x)|^2 \; d\mu(x) \r)^{1/2} \le \l( \int_{X} h^2 \; d\mu(x) \r)^{1/2} = h.
\end{equation}
Similarly, we could approximate the absolutely continuous measure $\nu$ with density $g$ and bounded support $Y$ with $\nu_h = \sum_{j=1}^M g_j \delta_{y_j}$ satisfying $W_2(\nu, \nu_h) \le h$.

The semi-discrete algorithms solve the OT between $\mu_h$ and $\nu$ upon finding a nodal function $\varphi_h: (x_i)_{i=1}^N \to \mR$ such that
\begin{equation}
f_i = \nu(F_i) = \int_{F_i} g(y)dy, \qquad F_i := \partial \varphi_h(x_i) \cap Y,
\end{equation}
where the discrete subdifferential is given by
\[
\partial \varphi_h(x_i) := \big\{y \in \mRd: \quad \varphi_h(x_j) \ge \varphi_h(x_i) + y \cdot (x_j - x_i) \quad \forall j =1, \cdots, N\big \}.
\]
This type of discretization was introduced by Oliker and Prussner \cite{OlikerPrussner-MA:1989} for Dirichlet boundary conditions whereas error estimates have been derived in \cite{NochZhang-MA:2016,NeilanZhang-MAW2p:2018}. The set $F_i$ coincides with the subdifferential of the convex envelope $\Gamma(\varphi_h)$ associated with $\varphi_h$ \cite[Lemma 2.1]{NochZhang-MA:2016}. The function $\Gamma(\varphi_h)$ is piecewise linear and induces a mesh with nodes $(x_i)_{i=1}^N$ whose elements may be quite elongated; see \cite[Section 2.2]{NochZhang-MA:2016}. We also refer to Berman \cite{Berman-Convergence:18}, who has derived error estimates for the second type boundary condition involving $\Gamma(\varphi_h)$.

Denote the barycenter of $\partial\varphi_h(x_i)$ with respect to the measure $\nu$ by $m_i$, namely $m_i:= f_i^{-1}\int_{F_i} yg(y) \, dy$, and define the map $T_h$ such that $T_h(x_i) = m_i$ for all $1 \le i \le N$. Under proper assumptions on measures $(\mu, \nu)$, to be stated in Section \ref{S:assumptions}, we prove a weighted $L^2$ error estimate in \Cref{C:OT-EE-Semi} for the exact optimal transport map $T$
\begin{equation}\label{eq:semi-discrete}
\l( \sum_{i=1}^N f_i \; |T(x_i) - T_h(x_i)|^2 \r)^{1/2} \le C(\mu, \nu) h^{1/2}.
\end{equation}
This rate of convergence coincides with that in \cite[Theorem 5.4]{Berman-Convergence:18} for $\nabla\Gamma(\varphi_h)$, but the error notion is different; we refer to Section \ref{S:EE-semi} for details. Our approach is taylored to discrete transport plans and thus extends to fully-discrete methods.

Fully-discrete methods aim to find the discrete optimal transport plan
\[
\gamma_h = \sum_{i=1}^N \sum_{j=1}^M \gamma_{h,ij} \, \delta_{x_i} \, \delta_{y_j}
\]
between $\mu_h$ and $\nu_h$ through the constrained minimization problem
\begin{equation}\label{eq:pb-fully-discrete}
\min_{\gamma_h} \sum_{i,j=1}^{N,M} \gamma_{h,ij} c_{ij} : \quad
\gamma_{h,ij} \ge 0, \;\; \sum_{i=1}^N \gamma_{h,ij} = g_{j}, \;\; \sum_{j=1}^M \gamma_{h,ij} = f_{i}.
\end{equation}
If we construct the map $T_h(x_i) := \frac{1}{f_i} \sum_{j=1}^M \gamma_{h,ij} y_j$ from $\gamma_h$ for $1\le i \le N$, then we also obtain a weighted $L^2$ error estimate in \Cref{T:OT-EE-Fully} for the optimal map $T$
\begin{equation}\label{eq:fully-discrete}
\l( \sum_{i=1}^N f_i \, |T(x_i) - T_h(x_i)|^2 \r)^{1/2} \le C(\mu, \nu) \, h^{1/2},
\end{equation}
under suitable assumptions on measures $(\mu, \nu)$ described in Section \ref{S:assumptions}. This is a new error estimate for fully-discrete schemes, and the convergence rate in \eqref{eq:fully-discrete} is the same as \eqref{eq:semi-discrete} for semi-discrete methods.

\subsection{Outline}
In Section \ref{S:assumptions} we introduce the notion of $\lambda$-regularity and show that it leads to $W^1_\infty$-regularity of the transport map $T$. We prove in Section \ref{S:Q-Stability} that $\lambda-$regularity implies a stability bound characterizing how the optimal transport plan $\gamma$ changes under perturbations of both $\mu$ and $\nu$; this hinges on an idea from \cite{Gigli-Holder:11}. We measure the change of transport plans using either a weighted $L^2$ norm in \Cref{th:OT_stability} or the Wasserstein metric in \Cref{cor:Wass_prod_space}. We apply the stability bound to derive error estimates in Sections \ref{S:EE-semi} and \ref{S:EE-fully}. For semi-discrete schemes, we obtain weighted $L^2$ error estimates in \Cref{th:EE-Semi} and \Cref{C:OT-EE-Semi} of Section \ref{S:EE-semi} with a convergence rate $O(h^{1/2})$. We also compare our geometric estimate of \Cref{C:OT-EE-Semi} with a similar one due to Berman \cite{Berman-Convergence:18}; however, the two error notions are different. Moreover, we obtain in \Cref{T:OT-EE-Fully} of Section \ref{S:EE-fully} an entirely new error estimate of order $O(h^{1/2})$ for fully-discrete schemes with errors measured in both the weighted $L^2$ norm and the Wasserstein distance. 

\section{Regularity and Nondegeneracy} \label{S:assumptions}

We now discuss the assumptions we make on measures $\mu, \nu \in P_2(\mRd)$ in order to prove quantitative stability bounds for optimal transport plans and error estimates for numerical methods.

\begin{Assumption}[$\lambda$-regularity]\label{l-regularity}
We say that $(\mu,\nu,\varphi)$ is $\lambda-$\textit{regular} for $\lambda > 0$ if
$\varphi: \mRd \to \mR$ is a convex function such that $\nabla \varphi$ is the optimal transport map from $\mu$ to $\nu$ and $\varphi$ is $\lambda-$smooth in the sense that:
\begin{equation}\label{eq:l-smooth}
\varphi((1-t)x_0 + tx_1) + \dfrac{t(1-t)\lambda}{2} |x_0 - x_1|^2 \ge (1-t)\varphi(x_0) + t\varphi(x_1),
\end{equation}
for any $x_0,x_1 \in \mRd$ and $t \in [0,1]$, i.e. $\frac{\lambda}{2}|x|^2 - \varphi(x)$ is convex. We also say $(\mu,\nu)$ is $\lambda-$regular if there exists a $\varphi$ such that $(\mu,\nu,\varphi)$ is $\lambda-$regular.
\end{Assumption}

We will see below that \eqref{eq:l-smooth} implies $\varphi\in W^2_\infty(\mathbb{R}^d)$; note that if $\varphi\in C^2(\mathbb{R}^d)$, then $D^2 \varphi(x) \le \lambda I$ for all $x\in \mathbb{R}^d$. Although we require $\varphi$ to be defined in the whole $\mRd$ in \Cref{l-regularity}, this is not restrictive because any convex function $\varphi:X\to\mathbb{R}$ defined in a bounded convex set $X$ can be extended as in \cite[Theorem 4.23]{Figalli-MA:2017}
$$
E{\varphi}(z) = \sup_{x \in X, p \in \partial\varphi(x)} \varphi(x) + \langle p, z-x \rangle.
$$
Then one can show that $E{\varphi} = \varphi$ in $X$, and $E{\varphi}$ satisfies \eqref{eq:l-smooth} for any $x_0,x_1 \in \mRd$ if \eqref{eq:l-smooth} holds for $\varphi$ and any $x_0, x_1 \in X$.

Now we introduce the {\it Legendre transform} $\varphi^*$ of $\varphi$, which is defined by
\begin{equation}\label{eq:legendre-def}
\varphi^*(y) = \sup_{z \in \mRd} \langle y,z \rangle - \varphi(z).
\end{equation}
Since $\varphi$ is convex, given $y \in \partial\varphi(x)$ we readily get for all $z\in\mathbb{R}^d$
\[
\varphi(x) + \langle y, z-x \rangle \le \varphi(z)
\quad\Rightarrow\quad
\langle y,z \rangle - \varphi(z) \le \langle y,x \rangle - \varphi(x)
\]
whence in view of \eqref{eq:legendre-def} the following two key properties of $\varphi^*$ are valid:
$$
\varphi^*(y) = \langle y,x \rangle - \varphi(x) < +\infty
\quad\forall \, y\in\partial\varphi(x),
$$
and $y \in \partial \varphi(x)$ if and only if $x \in \partial\varphi^*(y)$. To see that $y \in \partial \varphi(x)$ implies $x \in \partial\varphi^*(y)$, let $z \in\mRd$ be arbitrary and notice that \eqref{eq:legendre-def} for $\varphi^*(z)$ yields
\[
\varphi^*(y) + \langle x, z-y \rangle = \langle x,z \rangle - \varphi(x) \le \varphi^*(z)
\quad\Rightarrow\quad
x \in \partial\varphi^*(y).
\]
Consequently, if $\varphi$ and $\varphi^*$ are of class $C^1$, then $\nabla\varphi^*=(\nabla\varphi)^{-1}$ is the inverse of the transport map $\nabla\varphi$. Moreover,
\begin{equation}\label{eq:finiteness-phi*}
-\infty < \int_Y \varphi^*(y) \, d\nu(y) < +\infty,
\end{equation}
provided $(\mu,\nu,\varphi)$ is $\lambda$--regular. In fact, since $(\nabla \varphi)_{\#}\mu = \nu$, in view of \eqref{eq:change-variables} we have
$$
\int_Y \varphi^*(y) d\nu(y) = \int_X \varphi^*(\nabla \varphi(x) ) d\mu(x) = 
\int_X \Big( \langle \nabla \varphi(x),x \rangle - \varphi(x) \Big)\; d\mu(x).
$$
We exploit the convexity of $\varphi$ and $\frac{\lambda}{2} |\cdot|^2 - \varphi$ to obtain
$$
\varphi(0) - \frac{\lambda}{2} |x|^2 \le \varphi(x) - \langle \nabla \varphi(x), x \rangle \le \varphi(0).
$$
Since $\mu \in P_2(\mRd)$, \eqref{E:second-moment} imply that the following integrals are finite and yield \eqref{eq:finiteness-phi*}
$$
- \int_X \varphi(0) \, d\mu(x)
\le \int_X \Big( \langle \nabla \varphi(x),x \rangle - \varphi(x) \Big) \, d\mu(x)
\le \int_X \Big( -\varphi(0) + \dfrac{\lambda}{2} |x|^2 \Big) \, d\mu(x).
$$

The following lemma states that $\lambda$--regularity of a convex function $\varphi$ is equivalent to uniform convexity of its Legendre transform $\varphi^*$ \cite[Proposition 2.6]{AzePenotConvex}.

\begin{Lemma}[$\lambda$--regularity vs uniform convexity]\label{L:l-regularity}
If $\varphi: \mRd \to \mR$ is convex, then the following statements are valid:
\begin{enumerate}[(a)]
\item If $\varphi$ is $\lambda-$smooth, then its Legendre transform $\varphi^*$ must satisfy
\begin{equation}\label{eq:l-uniform-convex}
\varphi^*((1-t)y_0 + ty_1) + \dfrac{t(1-t)}{2\lambda} |y_0 - y_1|^2 \le (1-t)\varphi^*(y_0) + t\varphi^*(y_1)
\end{equation}
for all $y_0,y_1\in\mathbb{R}^d$ and $t \in [0,1]$, i.e. $\varphi^*(y) - \frac{1}{2\lambda}|y|^2$ is convex.

\item If $\varphi(y) - \frac{1}{2\lambda}|y|^2$ is convex, then its Legendre transform $\varphi^*$ is $\lambda-$smooth.
\end{enumerate}  
\end{Lemma}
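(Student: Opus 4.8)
Rather than invoke \cite{AzePenotConvex}, the plan is to derive both implications directly from the definition \eqref{eq:legendre-def} of $\varphi^*$ by testing the supremum against well-chosen convex combinations of points; the duality between $\lambda$-smoothness and $\frac1\lambda$-uniform convexity then falls out of a Young-type inequality. Throughout, fix $y_0,y_1\in\mathbb{R}^d$, $t\in[0,1]$, and set $y_t:=(1-t)y_0+ty_1$.

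For part (a) I would bound $\varphi^*(y_t)$ \emph{from above}. Given any $z\in\mathbb{R}^d$ and any vector $w\in\mathbb{R}^d$, put $z_0:=z-tw$ and $z_1:=z+(1-t)w$, so that $z=(1-t)z_0+tz_1$ and $z_1-z_0=w$. Applying $\lambda$-smoothness \eqref{eq:l-smooth} to $z_0,z_1$ gives $\varphi(z)\ge(1-t)\varphi(z_0)+t\varphi(z_1)-\tfrac{\lambda t(1-t)}{2}|w|^2$, while substituting $y_t=y_0-t(y_0-y_1)=y_1+(1-t)(y_0-y_1)$ yields the bookkeeping identity $\langle y_t,z\rangle=(1-t)\langle y_0,z_0\rangle+t\langle y_1,z_1\rangle+t(1-t)\langle y_0-y_1,w\rangle$. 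Combining these with $\langle y_i,z_i\rangle-\varphi(z_i)\le\varphi^*(y_i)$ for $i=0,1$ gives $\langle y_t,z\rangle-\varphi(z)\le(1-t)\varphi^*(y_0)+t\varphi^*(y_1)+t(1-t)\bigl(\langle y_0-y_1,w\rangle+\tfrac{\lambda}{2}|w|^2\bigr)$. Since $w$ is arbitrary, minimizing the last parenthesis over $w$ (optimal $w=-\tfrac1\lambda(y_0-y_1)$, value $-\tfrac{1}{2\lambda}|y_0-y_1|^2$) and then taking the supremum over $z$ delivers exactly \eqref{eq:l-uniform-convex}.

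For part (b) the plan is dual: bound $\varphi^*(y_t)$ \emph{from below}. First observe that $\psi:=\varphi-\tfrac{1}{2\lambda}|\cdot|^2$ being convex and finite forces $\varphi$ to grow at least quadratically, hence $\varphi^*$ is finite (so continuous and subdifferentiable) on all of $\mathbb{R}^d$. Choose $z_i\in\partial\varphi^*(y_i)$, $i=0,1$; then $y_i\in\partial\varphi(z_i)$ and the Fenchel equality gives $\varphi^*(y_i)=\langle y_i,z_i\rangle-\varphi(z_i)$. With $z_t:=(1-t)z_0+tz_1$, the definition of $\varphi^*$ gives $\varphi^*(y_t)\ge\langle y_t,z_t\rangle-\varphi(z_t)$; convexity of $\psi$ plus the identity $(1-t)|z_0|^2+t|z_1|^2-|z_t|^2=t(1-t)|z_0-z_1|^2$ gives $\varphi(z_t)\le(1-t)\varphi(z_0)+t\varphi(z_1)-\tfrac{t(1-t)}{2\lambda}|z_0-z_1|^2$; and expanding gives $\langle y_t,z_t\rangle=(1-t)\langle y_0,z_0\rangle+t\langle y_1,z_1\rangle-t(1-t)\langle y_0-y_1,z_0-z_1\rangle$. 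Chaining these three facts yields $\varphi^*(y_t)\ge(1-t)\varphi^*(y_0)+t\varphi^*(y_1)-t(1-t)\bigl(\langle y_0-y_1,z_0-z_1\rangle-\tfrac{1}{2\lambda}|z_0-z_1|^2\bigr)$, and the parenthesis is at most $\tfrac{\lambda}{2}|y_0-y_1|^2$ by Young's inequality $\langle a,b\rangle\le\tfrac{\lambda}{2}|a|^2+\tfrac{1}{2\lambda}|b|^2$. This is precisely $\lambda$-smoothness \eqref{eq:l-smooth} of $\varphi^*$.

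All steps are elementary; the two points needing care are the bookkeeping of the cross terms in the expansions of $\langle y_t,\cdot\rangle$, and — in part (b) — the well-definedness check that $\varphi^*$ is finite everywhere so that the subgradients $z_i$ and the Fenchel equality are available. I expect this well-definedness point to be the main (though mild) obstacle, since the lemma is stated for a general finite convex $\varphi$ rather than only for a $\lambda$-regular triple; under Assumption \ref{l-regularity} it is automatic. Alternatively one can sidestep (b) by noting that convexity of $\varphi^*-\tfrac{1}{2\lambda}|\cdot|^2$ makes $\varphi=\varphi^{**}$ an infimal convolution of a convex function with $\tfrac{\lambda}{2}|\cdot|^2$ — a Moreau envelope, which is classically $\lambda$-smooth — but the direct argument above keeps the section self-contained.
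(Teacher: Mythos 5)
Your proof is correct. Part (a) is essentially the paper's argument (which reproduces \cite[Proposition 2.6]{AzePenotConvex}) written in the dual direction: the paper starts from $(1-t)\varphi^*(y_0)+t\varphi^*(y_1)$, lower-bounds it via the Legendre definition evaluated at $x_0$ and $x_1=x_0+v$, applies $\lambda$-smoothness at the interpolant $x_t=x_0+tv$, then takes the supremum over $x_0$ and maximizes over $v$; you instead start from $\langle y_t,z\rangle-\varphi(z)$, with $z$ playing the role of $x_t$ and $w$ the role of $v$, and upper-bound it before minimizing over $w$ and taking the supremum over $z$. Same parametrization, same optimization — just presented from the other side of the inequality. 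Part (b) is a genuine addition: the paper dismisses it with ``the proof of (b) is similar,'' whereas you supply the natural dual argument explicitly — choosing subgradients $z_i\in\partial\varphi^*(y_i)$, invoking Fenchel's equality, using the convexity of $\varphi-\tfrac{1}{2\lambda}|\cdot|^2$ together with the parallelogram identity, and closing with Young's inequality $\langle a,b\rangle\le\tfrac{\lambda}{2}|a|^2+\tfrac{1}{2\lambda}|b|^2$ — and you correctly flag the one point that deserves a line, namely that convexity of $\varphi-\tfrac{1}{2\lambda}|\cdot|^2$ forces quadratic growth of $\varphi$, hence $\varphi^*$ is finite (so continuous and subdifferentiable) on all of $\mRd$, making the choice of $z_i$ legitimate.
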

\begin{proof} For completeness, we repeat the proof of \cite[Proposition 2.6]{AzePenotConvex} to show (a); the proof of (b) is similar. Given $y_0, y_1 \in \mRd$ and $t \in [0,1]$, for any $x_0, v \in \mRd$ we set $x_t := x_0 + tv$ and $y_t := y_0 + t(y_1 - y_0)$ and use \eqref{eq:legendre-def} to write
	\begin{align*}
	(1-t)\varphi^*(y_0) &+ t\varphi^*(y_1) \ge (1-t)\langle y_0,x_0 \rangle + t\langle y_1,x_1 \rangle - (1-t)\varphi(x_0) - t\varphi(x_1)\\
	&\ge (1-t)\langle y_0,x_0 \rangle + t\langle y_1,x_1 \rangle -\varphi(x_t) - \dfrac{t(1-t)\lambda}{2} |v|^2 \\
	&= \langle y_t,x_0+tv \rangle - \varphi(x_0+tv) + t(1-t)\langle y_1 - y_0,v \rangle - \dfrac{t(1-t)\lambda}{2} |v|^2.
	\end{align*}
We exploit that $x_0, v$ are arbitrary. Taking supremum with respect to $x_0$, we get
	$$
	(1-t)\varphi^*(y_0) + t\varphi^*(y_1) \ge \varphi^*(y_t) + t(1-t)\langle y_1 - y_0,v \rangle - \dfrac{t(1-t)\lambda}{2} |v|^2.
	$$
	Maximizing the last two terms with respect to $v$, we obtain $v=\frac{1}{\lambda}(y_1-y_0)$ and 
	$$
	(1-t)\varphi^*(y_0) + t\varphi^*(y_1) \ge \varphi^*(y_t) + \dfrac{t(1-t)}{2\lambda} |y_0 - y_1|^2,
	$$
	which is the asserted inequality \eqref{eq:l-uniform-convex}. That $\varphi^*(y)-\frac{1}{2\lambda} |y|^2$ is convex is a straightforward calculation that completes the proof.
\end{proof}

\begin{Lemma}[$W^2_\infty$--regularity]\label{L:regularity}
A $\lambda$--smooth convex map $\varphi$ is of class $W^2_\infty(\mathbb{R}^d)$ and the Lipschitz constant of $T=\nabla\varphi$ is $\lambda$, namely
\begin{equation}\label{E:Lip-T} 
|T(x_1) - T(x_2)| \le \lambda |x_1 - x_2| \quad\forall \, x_1,x_2\in\mathbb{R}^d.
\end{equation}
\end{Lemma}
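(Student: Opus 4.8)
The plan is to prove the global Lipschitz bound \eqref{E:Lip-T} first, and then read off the $W^2_\infty$ statement: once $T=\nabla\varphi$ is $\lambda$-Lipschitz, Rademacher's theorem shows that $D^2\varphi$ exists almost everywhere with $\|D^2\varphi\|_{L^\infty}\le\lambda$, and convexity gives $D^2\varphi\ge 0$, so $\varphi\in W^2_\infty(\mathbb{R}^d)$. Since $\varphi:\mathbb{R}^d\to\mathbb{R}$ is finite and convex it is continuous with $\partial\varphi(x)\neq\emptyset$ for every $x$; a priori, however, $\partial\varphi$ may be set-valued, so I would first establish the Lipschitz estimate for the map $x\mapsto\partial\varphi(x)$ and only afterwards conclude that it is single-valued.

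The first step is to record the pointwise form of $\lambda$-smoothness. Because $\psi:=\frac{\lambda}{2}|\cdot|^2-\varphi$ is convex and $y\in\partial\varphi(x)$ is equivalent to $\lambda x-y\in\partial\psi(x)$, the subgradient inequality for $\psi$, once rearranged, together with the subgradient inequality for $\varphi$, yields the two-sided estimate
\[
0 \;\le\; \varphi(z)-\varphi(x)-\langle y,\,z-x\rangle \;\le\; \frac{\lambda}{2}\,|z-x|^2
\qquad\text{for all }x,z\in\mathbb{R}^d,\ y\in\partial\varphi(x).
\]

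Next I would run the classical ``descent lemma $\Rightarrow$ co-coercivity'' argument in nonsmooth form. Given $x_1,x_2$ with $y_i\in\partial\varphi(x_i)$, the function $g_i(z):=\varphi(z)-\langle y_i,z\rangle$ is convex, is minimized at $x_i$, and inherits the upper bound above; evaluating its upper bound (with base point $x_j$, subgradient $y_j-y_i\in\partial g_i(x_j)$) at $z=x_j-\frac{1}{\lambda}(y_j-y_i)$ for $i\neq j$ and using $g_i(x_i)\le g_i(z)$ gives, after simplification,
\[
\varphi(x_i)-\varphi(x_j)\;\le\;\langle y_i,\,x_i-x_j\rangle-\frac{1}{2\lambda}\,|y_i-y_j|^2.
\]
Adding this to the inequality obtained by swapping $i$ and $j$ produces the co-coercivity bound $\langle y_1-y_2,\,x_1-x_2\rangle\ge\frac{1}{\lambda}|y_1-y_2|^2$, and the Cauchy--Schwarz inequality then gives $|y_1-y_2|\le\lambda|x_1-x_2|$. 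Taking $x_1=x_2$ shows $\partial\varphi(x)$ is a singleton at every $x$, hence $\varphi$ is everywhere differentiable with $T=\nabla\varphi$ single-valued, and the bound becomes exactly \eqref{E:Lip-T}; the regularity statement follows as in the first paragraph.

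I do not expect a genuine obstacle here: the computation is standard convex analysis and the only care needed is the bookkeeping with subdifferentials --- in particular, invoking the classical facts that a finite convex function on $\mathbb{R}^d$ has nonempty subdifferential everywhere and is differentiable precisely where that subdifferential is a singleton --- and verifying the algebra behind the special choice $z=x_j-\frac{1}{\lambda}(y_j-y_i)$.
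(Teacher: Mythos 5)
Your proof is correct, and it arrives at the same co-coercivity inequality $\langle y_1-y_2,x_1-x_2\rangle\ge\frac{1}{\lambda}|y_1-y_2|^2$ as the paper, but by a genuinely different route. The paper leverages Lemma~\ref{L:l-regularity}(a), which it has just proved: $\lambda$-smoothness of $\varphi$ implies that $\psi=\varphi^*-\frac{1}{2\lambda}|\cdot|^2$ is convex, and the paper then adds the two subgradient inequalities for $\psi$ at $y_1,y_2$ (using $x_i-\frac{1}{\lambda}y_i\in\partial\psi(y_i)$ when $x_i\in\partial\varphi^*(y_i)$) to get co-coercivity in three lines. You instead work directly with $\varphi$, first extracting the two-sided ``descent lemma'' $0\le\varphi(z)-\varphi(x)-\langle y,z-x\rangle\le\frac{\lambda}{2}|z-x|^2$ and then running the classical minimization argument with $g_i=\varphi-\langle y_i,\cdot\rangle$ and the carefully chosen evaluation point $z=x_j-\frac{1}{\lambda}(y_j-y_i)$. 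The paper's route is shorter because it recycles an already-established lemma about $\varphi^*$; yours is self-contained and avoids the Legendre transform entirely, which is the route most readers would recognize from smooth convex optimization. You are also more explicit than the paper about the a priori set-valuedness of $\partial\varphi$: you note that taking $x_1=x_2$ in the co-coercivity bound collapses $\partial\varphi(x)$ to a singleton, which is what makes $T=\nabla\varphi$ a well-defined single-valued map and closes the $W^2_\infty$ claim via Rademacher. The paper leaves this step implicit. Both proofs are sound; yours trades brevity for self-containment and a more familiar convex-analytic mechanism.
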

\begin{proof}
  According to \Cref{L:l-regularity}(a), the function $\psi(y) := \varphi^*(y) - \frac{1}{2\lambda} |y|^2$ is convex. If $x_1\in\partial\varphi^*(y_1)$ and $x_2\in\partial\varphi^*(y_2)$, then
\begin{gather*}
  x_1-\frac{1}{\lambda} y_1 \in \partial\psi(y_1)
  \quad\Rightarrow\quad
  \psi(y_2) \ge \psi(y_1) + \langle x_1-\frac{1}{\lambda} y_1, y_2-y_1 \rangle,
  \\
  x_2-\frac{1}{\lambda} y_2 \in \partial\psi(y_2)
  \quad\Rightarrow\quad
  \psi(y_1) \ge \psi(y_2) + \langle x_2-\frac{1}{\lambda} y_2, y_1-y_2 \rangle.
\end{gather*}
Adding the two inequalities and rearranging terms yields
\[
\frac{1}{\lambda} \, |y_1-y_2|^2 \le \langle y_1-y_2, x_1 - x_2\rangle \le
|y_1-y_2| \, |x_1 - x_2|.
\]
This implies the assertion \eqref{E:Lip-T}.
\end{proof}

It is now apparent from \Cref{L:regularity} that the constant $\lambda$ dictates the stability of the transport map $T=\nabla\varphi$; $\lambda$ is also the stability constant that appears in our error estimates of Sections \ref{S:EE-semi} and \ref{S:EE-fully}.
If $\varphi \in C^2(\mRd)$, then \eqref{E:Lip-T} is equivalent to $D^2 \varphi(x) \le \lambda I$ for all $x$. As we have already mentioned at the end of \Cref{SS:OT-IntroOT}, the optimal map from $\mu$ to $\nu$ for quadratic cost is given by $\nabla \varphi$ under suitable assumptions on $\mu$. 
By Caffarelli's regularity results \cite{caffarelli1992boundary, caffarelli1992regularity, caffarelli1996boundary}, if both $\overline{X}=\supp(\mu)$ and $\overline{Y}=\supp(\nu)$ are uniformly convex domains of $\mRd$ with $C^2$ boundary and the densities $f,g$ of $\mu, \nu$ are bounded away from $0$ and $f \in C^{0,\alpha}(\overline{X}), g\in C^{0,\alpha}(\overline{Y})$, then the solution $\varphi$ of the corresponding second boundary value \MA problem \eqref{MA} is of class $C^{2,\alpha}(\overline{X})$. This implies that
$(\mu,\nu,\varphi)$ is $\lambda-$regular for some $\lambda > 0$ if we extend $\varphi$ to $\mRd$. Moreover, the same assumptions imply that $(\nu,\mu,\varphi^*)$ is $\xi-$\textit{regular} for the Legendre transform $\varphi^* \in C^{2,\alpha}(\overline{Y})$  of $\varphi$ and some $\xi > 0$.

\section{Quantitative Stability of Optimal Transport Plans} \label{S:Q-Stability}
In this section, we generalize the quantitative stability bounds of Gigli \cite[Corollary 3.4]{Gigli-Holder:11} for optimal transport maps, and show some consequences of our theorem under suitable assumptions on measures $\mu$ and $\nu$.  They will be useful in Sections \ref{S:EE-semi} and \ref{S:EE-fully} to derive error estimates. The stability estimate in \cite{Gigli-Holder:11} is based on bounding the $L^2$ difference between an optimal transport map and another feasible transport map by the difference between their transport costs. Proposition \ref{prop:OT_stability_fix_measures} below is just a generalization of this important property in that it replaces transport maps by transport plans.

\begin{Proposition}[stability of transport plans]
	\label{prop:OT_stability_fix_measures}
	Let $(\mu,\nu,\varphi)$ be $\lambda-$regular for $\lambda > 0$, and $T = \nabla \varphi$ be the optimal transport map from $\mu$ to $\nu$. Then, for any transport plan $\gamma \in \Pi(\mu,\nu)$, there holds
        \begin{equation}
	\begin{aligned}\label{eq:OT_stability_fix_measures}
	  \int_{X \times Y} |y &- T(x)|^2 d\gamma(x,y)
          \\ & \le \lambda \l( \int_{X \times Y} |y-x|^2 d\gamma(x,y) - \int_{X} |T(x) - x|^2 d\mu(x) \r).
	\end{aligned}
        \end{equation}
\end{Proposition}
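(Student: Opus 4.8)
The plan is to establish a strengthened Young--Fenchel inequality
\[
\varphi(x) + \varphi^*(y) \ \ge\ \langle x,y\rangle + \frac{1}{2\lambda}\,|y - T(x)|^2
\qquad\text{for all } x,y\in\mathbb{R}^d,
\]
and then integrate it against an arbitrary plan $\gamma\in\Pi(\mu,\nu)$. The classical Fenchel inequality only gives $\varphi(x)+\varphi^*(y)\ge\langle x,y\rangle$; the quadratic remainder is precisely the gain coming from $\lambda$-smoothness of $\varphi$, i.e.\ from the $\tfrac{1}{2\lambda}$-uniform convexity of $\varphi^*$ supplied by \Cref{L:l-regularity}(a). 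This pointwise inequality does not care whether $y$ equals $T(x)$, which is exactly why the argument passes from maps to plans.

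To prove the displayed inequality I would argue as in the proof of \Cref{L:regularity}. Set $\psi(y):=\varphi^*(y)-\tfrac{1}{2\lambda}|y|^2$, which is convex. Since $T(x)=\nabla\varphi(x)\in\partial\varphi(x)$, Legendre duality (as recorded before \eqref{eq:finiteness-phi*}) gives $x\in\partial\varphi^*(T(x))$ and $\varphi^*(T(x))=\langle x,T(x)\rangle-\varphi(x)$; hence $x-\tfrac{1}{\lambda}T(x)\in\partial\psi(T(x))$. Writing the subgradient inequality $\psi(y)\ge\psi(T(x))+\langle x-\tfrac{1}{\lambda}T(x),\,y-T(x)\rangle$, substituting the definition of $\psi$, completing the square, and using $\varphi^*(T(x))=\langle x,T(x)\rangle-\varphi(x)$ once more leaves exactly the asserted inequality after rearrangement. (Where $\varphi^*(y)=+\infty$ the inequality is trivial.)

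It remains to integrate against $\gamma$ and rewrite the outcome. First one checks that every integral in sight is finite: $\varphi$ is convex and $\lambda$-smooth, hence of at most quadratic growth, so $\int_X|\varphi|\,d\mu<\infty$ since $\mu\in P_2(\mathbb{R}^d)$; $\int_Y|\varphi^*|\,d\nu<\infty$ by the two-sided bound on $\varphi^*(T(x))$ derived before \eqref{eq:finiteness-phi*}; $\int\langle x,y\rangle\,d\gamma$ and the pure moment terms are finite by \eqref{p-moment}; and $\int|y-T(x)|^2\,d\gamma<\infty$ because $T$ is $\lambda$-Lipschitz by \Cref{L:regularity}, so $|T(x)|^2\le C(1+|x|^2)$. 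Integrating the pointwise inequality over $X\times Y$, using that $\gamma$ has marginals $\mu,\nu$, and using the change of variables \eqref{eq:change-variables} in the form $\int_Y\varphi^*\,d\nu=\int_X\varphi^*(T(x))\,d\mu$, the left-hand side telescopes to $\int_X\bigl(\varphi(x)+\varphi^*(T(x))\bigr)\,d\mu=\int_X\langle x,T(x)\rangle\,d\mu$, whence
\[
\int_{X\times Y}|y-T(x)|^2\,d\gamma \ \le\ 2\lambda\Bigl(\int_X\langle x,T(x)\rangle\,d\mu-\int_{X\times Y}\langle x,y\rangle\,d\gamma\Bigr).
\]
Finally, expanding $|y-x|^2$ and $|T(x)-x|^2$ and using $\int|x|^2\,d\gamma=\int|x|^2\,d\mu$ together with $\int|y|^2\,d\gamma=\int|y|^2\,d\nu=\int_X|T(x)|^2\,d\mu$ (the last equality because $T_{\#}\mu=\nu$) gives
\[
\int_{X\times Y}|y-x|^2\,d\gamma-\int_X|T(x)-x|^2\,d\mu \ =\ 2\Bigl(\int_X\langle x,T(x)\rangle\,d\mu-\int_{X\times Y}\langle x,y\rangle\,d\gamma\Bigr),
\]
and combining the last two displays yields \eqref{eq:OT_stability_fix_measures}. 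I expect the only genuinely delicate step to be the strengthened Young inequality — that is, correctly identifying and exploiting the equivalence between $\lambda$-smoothness of $\varphi$ and uniform convexity of $\varphi^*$; the remainder is bookkeeping, with the only mild care needed in the change-of-variables step because $\varphi^*$ is guaranteed finite only $\nu$-a.e.
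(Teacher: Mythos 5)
Your proof is correct and follows essentially the same route as the paper's: both exploit the $\tfrac{1}{2\lambda}$-uniform convexity of $\varphi^*$ (from \Cref{L:l-regularity}(a)) via the subgradient inequality for $\psi = \varphi^* - \tfrac{1}{2\lambda}|\cdot|^2$ at $T(x)$, integrate against $\gamma$ using the marginal conditions and the push-forward identity $\int_Y \varphi^*\,d\nu = \int_X \varphi^*(T(x))\,d\mu$, and finish with the same algebraic identities. The only difference is cosmetic packaging: you isolate the pointwise estimate as a strengthened Fenchel--Young inequality $\varphi(x)+\varphi^*(y)\ge\langle x,y\rangle+\tfrac{1}{2\lambda}|y-T(x)|^2$ before integrating, whereas the paper starts from the telescoping identity $0=\int(\varphi^*(y)-\varphi^*(T(x)))\,d\gamma$ and inserts the uniform-convexity bound under the integral sign; these are equivalent after using $\varphi^*(T(x))=\langle x,T(x)\rangle-\varphi(x)$.
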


\begin{proof}
  We proceed in the same way as in \cite{Gigli-Holder:11}. If $\varphi^*$ is the Legendre transform of $\varphi$, then $\int_Y \varphi^*(y) d\nu$ is finite from \eqref{eq:finiteness-phi*}. Since $T_{\#} \mu = \nu$, using \eqref{eq:change-variables}, we have
\begin{align*} 
  0 = \int_Y \varphi^*(y) d\nu(y) - \int_X \varphi^*(T(x)) d\mu(x) =
  \int_{X \times Y} \l( \varphi^*(y) - \varphi^*(T(x)) \r) \ d\gamma(x,y).
\end{align*}
In view of \Cref{L:regularity} ($W^2_\infty$-regularity), the map $T=\nabla\varphi$ is
of class $W^1_\infty$. It thus follows that $x = \nabla\varphi^* (T(x))$ for all $x\in X$ because $\nabla\varphi^* = (\nabla\varphi)^{-1}$. Using 
	\[
	2 \langle z, z-y \rangle = |z|^2 - |y^2| + |z-y|^2 \quad \forall y,z \in \mRd,
	\]
together with \Cref{L:l-regularity} ($\lambda$-regularity vs uniform convexity), namely $x \mapsto \varphi^*(x) - \frac{1}{2\lambda}|x|^2$ is convex, we further obtain that
        \begin{align*}
	0 &=  \int_{X \times Y} \l( \varphi^*(y) - \varphi^*(T(x)) \r) \ d\gamma(x,y) \\
	&\ge \int_{X \times Y} \langle x, \ y - T(x) \rangle \ d\gamma(x,y) + \frac{1}{2\lambda} \int_{X \times Y} |y - T(x)|^2 \ d\gamma(x,y).
	\end{align*}        
	Noticing that
        \[
        2 \langle x, \ y - T(x) \rangle = |T(x) - x|^2 - |y - x|^2 - |T(x)|^2 + |y|^2
        \]
we deduce
	\begin{align*}
	2\int_{X \times Y} \langle x , \ y &- T(x) \rangle \ d\gamma(x,y)
	=  \int_X |T(x) - x|^2 d\mu(x)
        \\ &-  \int_{X \times Y} |y - x|^2 d\gamma(x,y) - \int_X |T(x)|^2 d\mu(x) + \int_Y |y|^2 d\nu(y).
        \end{align*}
        In view of \eqref{eq:change-variables} the last two terms are equal and cancel out. Consequently,
        \begin{equation*}
        2\int_{X \times Y} \langle x , \ y - T(x) \rangle \ d\gamma(x,y) = 
        \int_X |T(x) - x|^2 d\mu(x) - \int_{X \times Y} |y - x|^2 d\gamma(x,y) ,
	\end{equation*}
whence
        \[
	0 \ge \int_{X} |T(x) - x|^2 d\mu - \int_{X\times Y} |y - x|^2 d\gamma(x,y) + \frac{1}{\lambda} \int_{X\times Y} |y - T(x)|^2 \ d\gamma(x,y).
	\]
Rearranging the equation above gives \eqref{eq:OT_stability_fix_measures}.
\end{proof}

\begin{remark}[interpretation of \eqref{eq:OT_stability_fix_measures}]
  Notice that the right hand side in \eqref{eq:OT_stability_fix_measures} without factor $\lambda$ is the difference of transport costs between the given transport plan $\gamma$ and the optimal transport map $T$. The factor $\lambda$ acts as a stability constant.
\end{remark}

\begin{remark}[stability of transport maps]\label{rem:l2-error-plan}
	We point out that the left hand side of \eqref{eq:OT_stability_fix_measures} can be seen as the square of a weighted $L^2$-error between the transport plan $\gamma$ and the optimal map $T$. To understand this, notice that if the plan $\gamma$ in \Cref{prop:OT_stability_fix_measures} is induced by a map $S: \mRd \to \mRd$, i.e. $(id, S)_{\#} \mu = \gamma$, then \eqref{eq:OT_stability_fix_measures} can be written as
	\[
	\int_X |S(x) - T(x)|^2 d\mu(x) \le \lambda \l( \int_X |S(x)-x|^2 d\mu(x) - \int_X |T(x) - x|^2 d\mu(x) \r) ,
	\]
	which is the same as \cite[Proposition 3.3]{Gigli-Holder:11}. This is an estimate of $\Vert S - T \Vert^2_{L^2_{\mu}(X)}$.
\end{remark}

Let us recall the gluing lemma for measures (see \cite[p.23]{Villani-Optimal:08}), which plays an important role in proving the triangle inequality of Wasserstein distance in the theory of optimal transport. We refer to \cite[Lemma 5.5]{Santambrogio-Optimal:15} for a proof.

\begin{Lemma}[gluing of measures]\label{lemma:gluing}
Let $(X_i, \mu_i)$ be probability spaces for $i=1,2,3$, with $X_i\subset\mRd$, and $\gamma_{1,2} \in \Pi(\mu_1, \mu_2)$, $\gamma_{2,3} \in \Pi(\mu_2, \mu_3)$. Then there exists (at least) one measure $\gamma \in P(X_1 \times X_2 \times X_3)$ such that $(\pi_{1,2})_{\#} \gamma =\gamma_{1,2}$ and $(\pi_{2,3})_{\#} \gamma =\gamma_{2,3}$, where $\pi_{i,j}$ is the projection defined as $\pi_{i,j}(x_1,x_2,x_3) = (x_i,x_j)$. 
\end{Lemma}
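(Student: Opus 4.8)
The plan is to construct the glued measure $\gamma$ explicitly by disintegrating both given plans over their common marginal $\mu_2$ and then coupling the resulting conditional measures fiberwise, making the $X_1$- and $X_3$-coordinates conditionally independent given the $X_2$-coordinate. First I would invoke the disintegration theorem, which applies here because each $X_i\subset\mRd$ is a Borel subset of a separable metric space and hence the measures live on standard Borel spaces. Since both $\gamma_{1,2}$ and $\gamma_{2,3}$ have second, resp. first, marginal $\mu_2$, I can write $\gamma_{1,2}=\int_{X_2}\gamma_{1,2}^{z}\,d\mu_2(z)$ and $\gamma_{2,3}=\int_{X_2}\gamma_{2,3}^{z}\,d\mu_2(z)$, where for $\mu_2$-a.e. $z\in X_2$ the measures $\gamma_{1,2}^{z}\in P(X_1)$ and $\gamma_{2,3}^{z}\in P(X_3)$ are the conditional laws given $x_2=z$, and the maps $z\mapsto\gamma_{1,2}^{z}$, $z\mapsto\gamma_{2,3}^{z}$ are Borel. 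Concretely this means $\int_{X_1\times X_2}\phi\,d\gamma_{1,2}=\int_{X_2}\big(\int_{X_1}\phi(x_1,z)\,d\gamma_{1,2}^{z}(x_1)\big)\,d\mu_2(z)$ for every bounded Borel $\phi$, and analogously for $\gamma_{2,3}$.

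Next I would define $\gamma\in P(X_1\times X_2\times X_3)$ by prescribing its action on bounded Borel functions $\phi$ via
\[
\int_{X_1\times X_2\times X_3}\phi\,d\gamma:=\int_{X_2}\left(\int_{X_1}\int_{X_3}\phi(x_1,z,x_3)\,d\gamma_{2,3}^{z}(x_3)\,d\gamma_{1,2}^{z}(x_1)\right)d\mu_2(z),
\]
equivalently $\gamma=\int_{X_2}\big(\gamma_{1,2}^{z}\otimes\delta_{z}\otimes\gamma_{2,3}^{z}\big)\,d\mu_2(z)$. Measurability of the bracketed integrand in $z$ follows from the measurability of the two disintegration maps (and a monotone-class argument in $\phi$), so the outer integral is well defined, and taking $\phi\equiv1$ shows $\gamma$ is a probability measure.

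Then I would verify the two marginal constraints. Choosing $\phi(x_1,x_2,x_3)=\psi(x_1,x_2)$ and using that $\gamma_{2,3}^{z}$ is a probability measure, the inner $x_3$-integral equals $1$ and the definition of $\gamma$ collapses to the disintegration formula for $\gamma_{1,2}$, which gives $(\pi_{1,2})_{\#}\gamma=\gamma_{1,2}$. Symmetrically, choosing $\phi(x_1,x_2,x_3)=\psi(x_2,x_3)$ and using that $\gamma_{1,2}^{z}$ is a probability measure yields $(\pi_{2,3})_{\#}\gamma=\gamma_{2,3}$; in particular $(\pi_2)_{\#}\gamma=\mu_2$, consistent with both plans. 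Since test functions of product form determine a pushforward measure on $X_1\times X_2$ (resp. $X_2\times X_3$), this is enough.

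The main obstacle is purely measure-theoretic: securing the existence and Borel measurability of the disintegrations $z\mapsto\gamma_{1,2}^{z}$ and $z\mapsto\gamma_{2,3}^{z}$ so that the fiberwise product $\gamma_{1,2}^{z}\otimes\gamma_{2,3}^{z}$ depends measurably on $z$ and the outer integral against $\mu_2$ makes sense. Because $X_1,X_2,X_3$ are Borel subsets of $\mRd$, this is precisely the hypothesis of the standard disintegration theorem, so no additional assumptions are needed; everything after that is a routine verification with test functions.
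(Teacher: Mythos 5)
Your proof is correct and is exactly the standard disintegration argument: the paper does not prove this lemma itself but cites \cite[Lemma 5.5]{Santambrogio-Optimal:15}, whose proof is precisely the fiberwise coupling of the two disintegrations over the common marginal $\mu_2$ that you carry out. Your treatment of measurability of the disintegration kernels and the marginal verification via test functions of product form are the right points to flag, and they are handled correctly.
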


We use this lemma to glue the measures $\mu,\nu$ with their approximations $\mu_h,\nu_h$ as follows. Given  optimal transport plans $\gamma_h\in\Pi(\mu_h,\nu_h)$ between $\mu_h$ and $\nu_h$, $\alpha_h \in \Pi(\mu, \mu_h)$ between $\mu$ and $\mu_h$, and $\beta_h \in \Pi(\nu_h, \nu)$ between $\nu_h$ and $\nu$, we let $\Gamma_h \in P(X_1 \times X_2 \times X_3 \times X_4)$ be a probability measure so that
\begin{equation}\label{eq:Gamma_h}
  (\pi_{1,2})_{\#} \Gamma_h = \alpha_h,
  \quad
  (\pi_{2,3})_{\#} \Gamma_h = \gamma_h,
  \quad
  (\pi_{3,4})_{\#} \Gamma_h = \beta_h,
\end{equation}  
where $X_1=X_2=X$ and $X_3=X_4=Y$; \Cref{lemma:gluing} guarantees the existence of $\Gamma_h$. This in particular implies that projections $\pi_i$ on the coordinate $x_i$ satisfy
\begin{equation}\label{eq:marginals}
(\pi_1)_\# \Gamma_h = \mu,
\quad
(\pi_2)_\# \Gamma_h = \mu_h,
\quad
(\pi_3)_\# \Gamma_h = \nu_h,
\quad
(\pi_4)_\# \Gamma_h = \nu,
\end{equation}
along with $\sigma:=(\pi_{1,4})_\# \Gamma_h \in \Pi(\mu,\nu)$.
Moreover, the following formula holds
\begin{equation}\label{eq:Gammah-int}
 \int_{X_1 \times X_2 \times X_3 \times X_4} F(x_1,x_2) d\Gamma_h(x_1,x_2,x_3,x_4)
  = \int_{X_1 \times X_2} F(x_1,x_2) d\alpha_h(x_1,x_2), 
\end{equation}
for $(x_1,x_2;\alpha_h)$ and similar expressions are valid for
$(x_2,x_3;\gamma_h)$, $(x_3,x_4;\beta_h)$ and $(x_1,x_4;\sigma)$. If $X=Y=\mRd$, and
  $\mu,\mu_h,\nu_h,\nu\in P_2(\mRd)$, then $\alpha_h,\beta_h,\gamma_h,\sigma\in P_2(\mR^{2d})$ according to \eqref{p-moment}.

Now we state and prove our main perturbation estimates for transport plans.
\begin{Theorem}[perturbation of optimal transport plans]\label{th:OT_stability}
Let $T = \nabla \varphi$ be the optimal transport map from $\mu$ to $\nu$ and 
$(\mu,\nu,\varphi)$ be $\lambda-$regular for $\lambda>0$. Let $\mu_h, \nu_h \in P_2(\mRd)$ be approximations of $\mu, \nu$, let $\alpha_h \in \Pi(\mu, \mu_h), \beta_h \in \Pi(\nu_h, \nu)$ be two transport plans between $\mu,\mu_h$ and $\nu_h,\nu$ with $L^2$-errors
\begin{equation}\label{eq:OT_stability_eh}
e_{\alpha_h} := \l( \int_{X^2} |x - x'|^2 d\alpha_h(x,x') \r)^{\frac{1}{2}},
\quad
e_{\beta_h} := \l( \int_{Y^2} |y' - y|^2 d\beta_h(y',y) \r)^{\frac{1}{2}},
\end{equation}
and let $e_h := e_{\alpha_h} + e_{\beta_h}$. If $\gamma_h$ is an optimal transport plan between $\mu_h$ and $\nu_h$, then
	\begin{equation}\label{eq:OT_stability2}
	\l( \int_{X \times Y} |T(x') - y'|^2 d\gamma_h(x',y') \r)^{\frac{1}{2}} \le 
	2 \lambda^{\frac{1}{2}} \ e_h^{\frac{1}{2}} \Big( W_2(\mu,\nu) + e_h \Big)^{\frac{1}{2}} + \lambda e_{\alpha_h} + e_{\beta_h}.
	\end{equation}
\end{Theorem}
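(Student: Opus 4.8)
The plan is to work entirely on the glued measure $\Gamma_h$ on $X_1\times X_2\times X_3\times X_4$ already constructed in \eqref{eq:Gamma_h}--\eqref{eq:Gammah-int}, whose $(1,4)$-marginal $\sigma=(\pi_{1,4})_\#\Gamma_h$ lies in $\Pi(\mu,\nu)$, and to reduce the claim to \Cref{prop:OT_stability_fix_measures} applied to $\sigma$. The starting point is the triangle inequality in $L^2(\Gamma_h)$ for the coordinate maps $(x_1,x_2,x_3,x_4)\mapsto T(x_2)-x_3$, split as $\big(T(x_2)-T(x_1)\big)+\big(T(x_1)-x_4\big)+\big(x_4-x_3\big)$, giving
\[
\|T(x_2)-x_3\|_{L^2(\Gamma_h)} \le \|T(x_2)-T(x_1)\|_{L^2(\Gamma_h)} + \|T(x_1)-x_4\|_{L^2(\Gamma_h)} + \|x_4-x_3\|_{L^2(\Gamma_h)}.
\]
By the marginal identity \eqref{eq:Gammah-int} for $(x_2,x_3;\gamma_h)$ the left side equals $\big(\int_{X\times Y}|T(x')-y'|^2\,d\gamma_h(x',y')\big)^{1/2}$, so it suffices to bound the three terms on the right.

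For the first term, \Cref{L:regularity} gives that $T=\nabla\varphi$ is $\lambda$-Lipschitz, so $\|T(x_2)-T(x_1)\|_{L^2(\Gamma_h)}\le\lambda\|x_1-x_2\|_{L^2(\Gamma_h)}=\lambda e_{\alpha_h}$ by the marginal identity for $(x_1,x_2;\alpha_h)$. For the third term, the marginal identity for $(x_3,x_4;\beta_h)$ gives $\|x_4-x_3\|_{L^2(\Gamma_h)}=e_{\beta_h}$. The middle term is the heart of the argument: since $(\pi_{1,4})_\#\Gamma_h=\sigma$, it equals $\big(\int_{X\times Y}|T(x)-y|^2\,d\sigma(x,y)\big)^{1/2}$, and I would apply \Cref{prop:OT_stability_fix_measures} to the feasible plan $\gamma=\sigma\in\Pi(\mu,\nu)$ together with the observation that $\int_X|T(x)-x|^2\,d\mu=W_2(\mu,\nu)^2$ because $T$ is the optimal map, obtaining
\[
\int_{X\times Y}|T(x)-y|^2\,d\sigma \le \lambda\Big(\int_{X\times Y}|y-x|^2\,d\sigma - W_2(\mu,\nu)^2\Big).
\]

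To close, I would bound $\int_{X\times Y}|y-x|^2\,d\sigma$ from above. Again by the triangle inequality in $L^2(\Gamma_h)$ and the marginal identities,
\[
\Big(\int_{X\times Y}|y-x|^2\,d\sigma\Big)^{1/2} = \|x_1-x_4\|_{L^2(\Gamma_h)} \le \|x_1-x_2\|_{L^2(\Gamma_h)} + \|x_2-x_3\|_{L^2(\Gamma_h)} + \|x_3-x_4\|_{L^2(\Gamma_h)} = e_{\alpha_h} + W_2(\mu_h,\nu_h) + e_{\beta_h},
\]
using that $\gamma_h$ is optimal so $\|x_2-x_3\|_{L^2(\Gamma_h)}^2=\int|x-y|^2\,d\gamma_h=W_2(\mu_h,\nu_h)^2$. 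Then the triangle inequality for $W_2$ and the bounds $W_2(\mu,\mu_h)\le e_{\alpha_h}$, $W_2(\nu_h,\nu)\le e_{\beta_h}$ (valid because $\alpha_h,\beta_h$ are admissible plans realizing those costs) give $W_2(\mu_h,\nu_h)\le W_2(\mu,\nu)+e_h$, hence $\big(\int|y-x|^2\,d\sigma\big)^{1/2}\le W_2(\mu,\nu)+2e_h$. Substituting into the previous display yields $\int_{X\times Y}|T(x)-y|^2\,d\sigma \le \lambda\big((W_2(\mu,\nu)+2e_h)^2-W_2(\mu,\nu)^2\big)=4\lambda e_h\big(W_2(\mu,\nu)+e_h\big)$, so the middle term is at most $2\lambda^{1/2}e_h^{1/2}(W_2(\mu,\nu)+e_h)^{1/2}$, and adding the three bounds gives \eqref{eq:OT_stability2}.

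I do not expect a genuine obstacle, only careful bookkeeping: checking that all integrals are finite (from $\mu,\mu_h,\nu_h,\nu\in P_2(\mRd)$ via \eqref{p-moment} and the $W^2_\infty$-regularity of $\varphi$), verifying that $\sigma\in\Pi(\mu,\nu)$ so that \Cref{prop:OT_stability_fix_measures} legitimately applies, and tracking the factor $2e_h$ (rather than $e_h$) in the estimate for $\int|y-x|^2\,d\sigma$ — this factor is precisely what produces the constant $2$ in front of $\lambda^{1/2}e_h^{1/2}$ in \eqref{eq:OT_stability2}.
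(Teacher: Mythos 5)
Your proof is correct and matches the paper's argument essentially step for step: same gluing construction, same application of \Cref{prop:OT_stability_fix_measures} to $\sigma=(\pi_{1,4})_\#\Gamma_h$, same Lipschitz bound from \Cref{L:regularity}, and the same triangle-inequality bookkeeping in $L^2(\Gamma_h)$; the only difference is cosmetic (you apply the three-term decomposition up front, while the paper first derives the bound on $\sigma$ and then perturbs to $\gamma_h$).
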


\begin{remark}[interpretation of \eqref{eq:OT_stability2}]\label{rem:interpret-OT_stability}
	We emphasize that, in view of \Cref{rem:l2-error-plan} (stability of transport maps), the left hand side of \eqref{eq:OT_stability2} can be viewed as the square of the $L^2$-error between the optimal map $T$ and the transport plan $\gamma_h$. The quantity $e_h$ is a measure of the approximation errors between $\mu,\mu_h$ and $\nu,\nu_h$. A typical choice of $\alpha_h, \beta_h$ is to take the optimal transport plans between $\mu,\mu_h$ and $\nu_h,\nu$ respectively, which implies $e_h = W_2(\mu,\mu_h) + W_2(\nu_h,\nu)$. 
\end{remark}

\begin{remark}[non-uniqueness]\label{rem:non-unique-discrete}
In general, an optimal transport plan $\gamma_h$ between $\mu_h$ and $\nu_h$ may be neither unique nor induced by a map, no matter how small $e_h$ is; for instance, if $X=\{(\pm 1,0)\}_{i=1}^2, Y=\{(0,\pm 1)\}_{j=1}^2$, then any plan from $X$ to $Y$ has the same cost. However, \eqref{eq:OT_stability2} in \Cref{th:OT_stability} shows that all optimal transport plans are somewhat close to the map $T$ in a certain $L^2$-sense dictated by $e_h^{1/2}$.
\end{remark}

\begin{proof}[Proof of \Cref{th:OT_stability}]
The proof proceeds as in \cite[Corollary 3.4]{Gigli-Holder:11}. Let $\Gamma_h\in P_2(X^2 \times Y^2)$ be a measure gluing $\mu, \mu_h, \nu_h, \nu$ and satisfying \eqref{eq:Gamma_h} and \eqref{eq:marginals}. Since $\sigma = (\pi_{1,4})_{\#} \Gamma_h \in \Pi(\mu,\nu)$ is a transport plan between $\mu$ and $\nu$, \Cref{prop:OT_stability_fix_measures} (stability of transport plans) gives
	\begin{equation}\label{eq:proof_OT_stability_1}
	\int_{X \times Y} |T(x) - y|^2 d\sigma(x,y) \le
	\lambda \l( \int_{X \times Y} |x - y|^2 d\sigma(x,y) - W_2^2(\mu, \nu) \r),
	\end{equation}
	according to the definition of Wasserstein distance $W_2^2(\mu,\nu) = \int_{X} |T(x)-x|^2 d\mu(x)$ and the fact that $T$ is the optimal map from $\mu$ to $\nu$. Since $\gamma_h= (\pi_{2,3})_\# \Gamma_h \in \Pi(\mu_h,\nu_h)$ is an optimal transport plan between $\mu_h$ and $\nu_h$, i.e.
	\[
	W_2^2(\mu_h,\nu_h) = \int_{X \times Y} |x'-y'|^2 d\gamma_h(x',y'),
	\]
        applying the triangle inequality, we deduce
        \begin{equation}\label{E:W-triangle}
        W_2(\mu_h,\nu_h) \le W_2(\mu_h,\mu) + W_2(\mu,\nu) + W_2(\nu,\nu_h)
        \le W_2(\mu,\nu) + e_h.
        \end{equation}
        Combining the relation $\sigma=(\pi_{1,4})_\#\Gamma_h$ between $\sigma$ and $\Gamma_h$, namely 
	\begin{align*}
        \l( \int_{X \times Y} |x - y|^2 d\sigma(x,y) \r)^{\frac12} =    
	\l( \int_{X^2 \times Y^2} |x - y|^2 d\Gamma_h(x,x',y',y) \r)^{\frac{1}{2}},
        \end{align*}
        with the triangle inequality yields
        \begin{align*}
        \l( \int_{X \times Y} |x - y|^2 d\sigma(x,y) \r)^{\frac12}   
	& \le \l( \int_{X^2 \times Y^2} |x - x'|^2 d\Gamma_h(x,x',y',y) \r)^{\frac{1}{2}} \\
        & \quad + \l( \int_{X^2 \times Y^2} |x' - y'|^2 d\Gamma_h(x,x',y',y) \r)^{\frac{1}{2}} \\ & \quad + \l( \int_{X^2 \times Y^2} |y' - y|^2 d\Gamma_h(x,x',y',y) \r)^{\frac{1}{2}}.
        \end{align*}
        In view of \eqref{eq:Gamma_h} and \eqref{E:W-triangle}, this expression can be equivalently rewritten as
        \begin{align*}
        \l( \int_{X \times Y} |x - y|^2 d\sigma(x,y) \r)^{\frac12} 
	& \le \l( \int_{X \times Y} |x - x'|^2 d\alpha_h(x,x') \r)^{\frac{1}{2}} \\
        & \quad + \l( \int_{X \times Y} |x' - y'|^2 d\gamma_h(x',y') \r)^{\frac{1}{2}} \\
	& \quad + \l( \int_{X \times Y} |y' - y|^2 d\beta_h(y',y) \r)^{\frac{1}{2}} 
        \le W_2(\mu, \nu) + 2e_h.
	\end{align*}
	This implies the inequality
	\[
	\int_{X \times Y} |x - y|^2 d\sigma(x,y) - W_2^2(\mu, \nu)
	\le 4 W_2(\mu, \nu)e_h + 4 e_h^2,
	\]
	whence substitution  into \eqref{eq:proof_OT_stability_1} immediately gives
\begin{equation}\label{eq:OT_stability1}
\l( \int_{X \times Y} |T(x) - y|^2 d\sigma(x,y) \r)^{\frac{1}{2}} \le 
	2 \lambda^{\frac12} \ e_h^{\frac{1}{2}} \Big( W_2(\mu,\nu) + e_h \Big)^{\frac{1}{2}}.
\end{equation}
	
To prove \eqref{eq:OT_stability2}, we first recall \eqref{eq:Gamma_h} and \eqref{eq:marginals} to write
\begin{align*}
  \int_{X \times Y} |T(x') - y'|^2 d\gamma_h(x',y') &=
  \int_{X^2 \times Y^2} |T(x') - y'|^2 d\Gamma_h(x,x',y',y),
  \\
  \int_{X \times Y} |T(x) - y|^2 d\sigma(x,y) &=
  \int_{X^2 \times Y^2} |T(x) - y|^2 d\Gamma_h(x,x',y',y),
\end{align*}
and next utilize the triangle inequality together with the Lipschitz property \eqref{E:Lip-T} of the optimal transport map $T$ to obtain
\begin{align*}
	&\l( \int_{X \times Y} |T(x') - y'|^2 d\gamma_h(x',y') \r)^{\frac{1}{2}} - \l( \int_{X \times Y} |T(x) - y|^2 d\sigma(x,y) \r)^{\frac{1}{2}}\\
	&\le \l( \int_{X^2 \times Y^2} |T(x') - T(x)|^2 d\Gamma_h(x,x',y',y) \r)^{\frac{1}{2}} +  \l( \int_{X^2 \times Y^2} |y - y'|^2 d\Gamma_h(x,x',y',y) \r)^{\frac{1}{2}} \\
  & \le \lambda \l( \int_{X \times Y} |x' - x|^2 d\alpha_h(x,x') \r)^{\frac{1}{2}} +  \l( \int_{X \times Y} |y - y'|^2 d\beta_h(y',y) \r)^{\frac{1}{2}} = \lambda e_{\alpha_h} + e_{\beta_h}.
\end{align*}
	Rearranging the above inequality and using \eqref{eq:OT_stability1} proves \eqref{eq:OT_stability2}.
\end{proof}

One could also consider a plan $\gamma_h \in \Pi(\mu_h, \nu_h)$ which is not exactly optimal but close to the optimal transport plan. For this case, we prove the following corollary.
\begin{Corollary}[perturbation of transport plans in $L^2$]\label{cor:OT_stability}
Let $\mu,\nu,\mu_h,\nu_h$ and $\varphi$ be as in \Cref{th:OT_stability} (perturbation of optimal transport plans). Let $\gamma_h\in\Pi(\mu_h,\nu_h)$ be a transport plan between $\mu_h$ and $\nu_h$, and let
$\wt{e}_h$ be given by
\begin{equation}\label{eq:OT_stability_epsilon}
  \wt{e}_h := e_h + \frac{\ve_h}{2},
  \quad
  \ve_h := \l( \int_{X \times Y} |x' - y'|^2 d\gamma_h(x',y') \r)^{\frac{1}{2}} - W_2(\mu_h, \nu_h), 
\end{equation}
and $e_h=e_{\alpha_h} + e_{\beta_h}$ be defined in \eqref{eq:OT_stability_eh}. We then have
	\begin{equation}\label{eq:OT_stability2-2}
	\l( \int_{X \times Y} |T(x') - y'|^2 d\gamma_h(x',y') \r)^{\frac{1}{2}} \le 
	2 \lambda^{\frac12} \ \wt{e}_h^{\frac{1}{2}} \Big( W_2(\mu,\nu) + \wt{e}_h \Big)^{\frac{1}{2}} + \lambda e_{\alpha_h} + e_{\beta_h}.
	\end{equation}
\end{Corollary}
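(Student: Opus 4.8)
The plan is to replay the proof of \Cref{th:OT_stability} almost verbatim, isolating the single place where optimality of $\gamma_h$ was used and paying it off with the suboptimality gap $\ve_h$. First I would note that $\ve_h\ge 0$ automatically, since $\int_{X\times Y}|x'-y'|^2 d\gamma_h\ge W_2^2(\mu_h,\nu_h)$ for any $\gamma_h\in\Pi(\mu_h,\nu_h)$; hence $\wt{e}_h\ge e_h\ge 0$. Then, exactly as before, I would invoke \Cref{lemma:gluing} to build a gluing $\Gamma_h\in P_2(X^2\times Y^2)$ of $\mu,\mu_h,\nu_h,\nu$ satisfying \eqref{eq:Gamma_h}--\eqref{eq:marginals} — this construction does not use optimality of $\gamma_h$ and carries over unchanged — and set $\sigma:=(\pi_{1,4})_\#\Gamma_h\in\Pi(\mu,\nu)$.

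Next I would apply \Cref{prop:OT_stability_fix_measures} to $\sigma$, which yields \eqref{eq:proof_OT_stability_1} verbatim, as it relies only on the $\lambda$-regularity of $(\mu,\nu,\varphi)$. The sole modification is in estimating the transport cost of $\sigma$: the triangle-inequality chain through $\Gamma_h$ still gives
\[
\l( \int_{X\times Y} |x-y|^2 d\sigma \r)^{1/2} \le e_{\alpha_h} + \l( \int_{X\times Y} |x'-y'|^2 d\gamma_h \r)^{1/2} + e_{\beta_h},
\]
but now, instead of replacing the middle term by $W_2(\mu_h,\nu_h)$, I would use the definition \eqref{eq:OT_stability_epsilon} to write it as $W_2(\mu_h,\nu_h)+\ve_h$ and then bound $W_2(\mu_h,\nu_h)\le W_2(\mu,\nu)+e_h$ via \eqref{E:W-triangle}. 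This produces
\[
\l( \int_{X\times Y} |x-y|^2 d\sigma \r)^{1/2} \le W_2(\mu,\nu) + 2e_h + \ve_h = W_2(\mu,\nu) + 2\wt{e}_h,
\]
hence $\int_{X\times Y}|x-y|^2 d\sigma - W_2^2(\mu,\nu) \le 4W_2(\mu,\nu)\wt{e}_h + 4\wt{e}_h^2$, and substitution into \eqref{eq:proof_OT_stability_1} gives the analogue of \eqref{eq:OT_stability1} with $e_h$ replaced by $\wt{e}_h$:
\[
\l( \int_{X\times Y} |T(x)-y|^2 d\sigma \r)^{1/2} \le 2\lambda^{1/2} \wt{e}_h^{1/2} \big( W_2(\mu,\nu) + \wt{e}_h \big)^{1/2}.
\]

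Finally, the passage from $\sigma$ to $\gamma_h$ is identical to the last display block in the proof of \Cref{th:OT_stability}: rewriting both integrals over $\Gamma_h$ and using the triangle inequality together with the Lipschitz bound \eqref{E:Lip-T} for $T=\nabla\varphi$ gives
\[
\l( \int_{X\times Y} |T(x')-y'|^2 d\gamma_h \r)^{1/2} - \l( \int_{X\times Y} |T(x)-y|^2 d\sigma \r)^{1/2} \le \lambda e_{\alpha_h} + e_{\beta_h},
\]
a step that does not see $\ve_h$ at all. Combining the last two displays yields \eqref{eq:OT_stability2-2}. There is no genuine obstacle here, since all the analytic content is already contained in \Cref{th:OT_stability} and \Cref{prop:OT_stability_fix_measures}; the only point requiring care is the bookkeeping that the suboptimality gap enters only through the cost of $\sigma$ — and therefore only through $\wt{e}_h$ in the first summand of the bound — while the Lipschitz correction $\lambda e_{\alpha_h}+e_{\beta_h}$ is unaffected. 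This is precisely why $\wt{e}_h$ is defined with the factor $\tfrac12$ on $\ve_h$: the chain above contributes $2e_h+\ve_h=2\wt{e}_h$ to the cost of $\sigma$, matching the $2\wt{e}_h$ that appears after squaring.
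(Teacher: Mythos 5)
Your argument is correct and is essentially the paper's own proof: both replay Theorem~\ref{th:OT_stability} verbatim, and both locate the single use of optimality of $\gamma_h$ in bounding $\bigl(\int|x'-y'|^2\,d\gamma_h\bigr)^{1/2}$, replacing it by $W_2(\mu_h,\nu_h)+\ve_h$ and tracking the extra $\ve_h$ through the chain so that $2e_h+\ve_h=2\wt e_h$. Your closing remark explaining why $\wt e_h$ carries the factor $\tfrac12$ on $\ve_h$ is a nice piece of bookkeeping that the paper leaves implicit, but the route is the same.
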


\begin{proof}
We argue as in \Cref{th:OT_stability}, except that instead of \eqref{E:W-triangle} we now have
	\begin{equation*}
	\l( \int_{X \times Y} |x' - y'|^2 d\gamma_h(x',y') \r)^{\frac{1}{2}} \le
	W_2(\mu, \nu) + e_h + \ve_h,
	\end{equation*}
whence \eqref{eq:OT_stability1} becomes 
\begin{equation}\label{eq:OT_stability2-1}
\l( \int_{X \times Y} |T(x) - y|^2 d\sigma(x,y) \r)^{\frac{1}{2}} \le 
	2 \, \lambda^{\frac12} \, \wt e_h^{\frac{1}{2}} \Big( W_2(\mu,\nu) + \wt e_h \Big)^{\frac{1}{2}}.
\end{equation}  
The rest of the proof continues as in \Cref{th:OT_stability}.
\end{proof}

Instead of measuring the $L^2$-error in \eqref{eq:OT_stability2-2} between the optimal transport map $T$ from $\mu$ to $\nu$ and the optimal transport plan $\gamma_h$ from $\mu_h$ to $\nu_h$, we could alternatively characterize the discrepancy between the corresponding plans $\gamma$ and $\gamma_h$ in terms of the Wasserstein distance. This is possible in light of \eqref{p-moment} because $\gamma, \gamma_h \in P_2(\mR^{2d})$ provided $\mu,\nu\in P_2(\mRd)$ and $\mu_h,\nu_h\in P_2(\mRd)$, respectively.

\begin{Corollary}[perturbation of transport plans in Wasserstein metric]\label{cor:Wass_prod_space}
Let $T = \nabla \varphi$ be the optimal transport map from $\mu$ to $\nu$ and $(\mu,\nu,\varphi)$ be $\lambda-$regular for $\lambda > 0$. Let $\gamma = (id,T)_\# \mu$ be the optimal transport plan induced by $T$. Let $\mu_h, \nu_h \in P_2(\mRd)$ be approximations to $\mu,\nu$ and $\gamma_h \in \Pi(\mu_h, \nu_h)$ be a transport plan between $\mu_h$ and $\nu_h$. Then we have
	\[
	W_2(\gamma, \gamma_h) \le 2 \, \lambda^{\frac12} \,
	\wt{e}_h^{\frac{1}{2}} \Big( W_2(\mu,\nu) + \wt{e}_h \Big)^{\frac{1}{2}} + e_h,
	\]
	where $e_h = W_2(\mu, \mu_h) + W_2(\nu, \nu_h)$ and $\wt{e}_h$ is defined in \eqref{eq:OT_stability_epsilon}.
\end{Corollary}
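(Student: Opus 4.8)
The plan is to construct a competitor coupling between $\gamma$ and $\gamma_h$ on the product space $\mathbb{R}^{2d}\times\mathbb{R}^{2d}$ using the gluing measure $\Gamma_h$ already at hand, and then estimate the transport cost of that coupling directly. Recall that $\gamma=(id,T)_\#\mu$, so a point in the support of $\gamma$ has the form $(x,T(x))$. The natural candidate coupling is the pushforward of $\Gamma_h$ under the map $(x,x',y',y)\mapsto\big((x,T(x)),(x',y')\big)$; I must first check this is a genuine element of $\Pi(\gamma,\gamma_h)$. Its first marginal is the law of $(x,T(x))$ where $x\sim(\pi_1)_\#\Gamma_h=\mu$, which is exactly $\gamma$; its second marginal is the law of $(x',y')$, which by \eqref{eq:Gamma_h} is $(\pi_{2,3})_\#\Gamma_h=\gamma_h$. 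So this coupling is admissible, and consequently
\[
W_2^2(\gamma,\gamma_h)\le\int_{X^2\times Y^2}\Big(|x-x'|^2+|T(x)-y'|^2\Big)\,d\Gamma_h(x,x',y',y).
\]

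Next I would take square roots and apply the triangle inequality in $L^2(\Gamma_h)$ to split the right-hand side as
\[
W_2(\gamma,\gamma_h)\le\Big(\int|x-x'|^2\,d\Gamma_h\Big)^{1/2}+\Big(\int|T(x)-y'|^2\,d\Gamma_h\Big)^{1/2}.
\]
The first term equals $e_{\alpha_h}$ by \eqref{eq:Gammah-int} applied to the pair $(x_1,x_2;\alpha_h)$, and with the choice $\alpha_h$ optimal this is $W_2(\mu,\mu_h)$. For the second term I would split further, inserting $y$ via the triangle inequality and using that the pair $(x_3,x_4)$ has law $\beta_h$:
\[
\Big(\int|T(x)-y'|^2\,d\Gamma_h\Big)^{1/2}\le\Big(\int|T(x)-y|^2\,d\Gamma_h\Big)^{1/2}+\Big(\int|y-y'|^2\,d\Gamma_h\Big)^{1/2}.
\]
The last term is $e_{\beta_h}=W_2(\nu_h,\nu)$, and the first is $\big(\int_{X\times Y}|T(x)-y|^2\,d\sigma(x,y)\big)^{1/2}$ because $\sigma=(\pi_{1,4})_\#\Gamma_h$. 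At this point I invoke exactly the bound \eqref{eq:OT_stability2-1} established in the proof of \Cref{cor:OT_stability} (which already accounts for the non-optimality slack $\ve_h$ and hence uses $\wt e_h$ rather than $e_h$), namely
\[
\Big(\int_{X\times Y}|T(x)-y|^2\,d\sigma(x,y)\Big)^{1/2}\le 2\lambda^{1/2}\,\wt e_h^{1/2}\big(W_2(\mu,\nu)+\wt e_h\big)^{1/2}.
\]
Adding everything gives $W_2(\gamma,\gamma_h)\le 2\lambda^{1/2}\wt e_h^{1/2}(W_2(\mu,\nu)+\wt e_h)^{1/2}+e_{\alpha_h}+e_{\beta_h}$, and with $\alpha_h,\beta_h$ chosen optimal, $e_{\alpha_h}+e_{\beta_h}=W_2(\mu,\mu_h)+W_2(\nu_h,\nu)=e_h$, which is precisely the claimed estimate.

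I expect the main obstacle — or rather the only point needing real care — to be the bookkeeping of marginals in the two gluing steps: verifying that the pushforward of $\Gamma_h$ under $(x,x',y',y)\mapsto((x,T(x)),(x',y'))$ has the right two marginals $\gamma$ and $\gamma_h$, and that applying \eqref{eq:Gammah-int} to the relevant pairs is legitimate (this is where measurability of $T=\nabla\varphi$, guaranteed by \Cref{L:regularity}, is used to ensure $(id,T)_\#\mu=\gamma$ is well defined and that the composite map is $\Gamma_h$-measurable). Everything else is the triangle inequality in $L^2(\Gamma_h)$ and a direct appeal to the already-proven intermediate bound \eqref{eq:OT_stability2-1}; no new analytic input beyond \Cref{th:OT_stability}/\Cref{cor:OT_stability} and the gluing lemma is required. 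One should also note $\gamma,\gamma_h\in P_2(\mathbb{R}^{2d})$ (from \eqref{p-moment}) so that $W_2(\gamma,\gamma_h)$ is finite and the comparison is meaningful, exactly as remarked before the statement.
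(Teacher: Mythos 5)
Your argument is correct and follows essentially the same route as the paper: both construct the coupling $\wt\Gamma_h$ by pushing the gluing measure $\Gamma_h$ forward through $(x,x',y',y)\mapsto\big((x,T(x)),(x',y')\big)$, split the transport cost by the triangle inequality in $L^2(\Gamma_h)$ into the terms $|x-x'|$, $|y'-y|$, $|T(x)-y|$, identify the first two with $W_2(\mu,\mu_h)$ and $W_2(\nu_h,\nu)$ via \eqref{eq:Gammah-int} and optimality of $\alpha_h,\beta_h$, and bound the last via \eqref{eq:OT_stability2-1}. The only cosmetic difference is that you apply the $L^2$ triangle inequality in two successive steps rather than one three-way split.
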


\begin{proof}
Let $\alpha_h \in \Pi(\mu, \mu_h), \beta_h = \Pi(\nu_h, \nu)$ be the optimal transport plans for the corresponding OT, and $\Gamma_h\in P_2(\mR^4)$ be a gluing measure satisfying \eqref{eq:Gamma_h} and \eqref{eq:marginals}. We need to construct a suitable transport plan $\wt{\Gamma}_h \in\Pi (\gamma,\gamma_h)$ from $\gamma$ to $\gamma_h$ and use the fact that the corresponding transport cost dominates $W_2(\gamma, \gamma_h)$. To this end, we introduce the map $S: \mR^{4d} \to \mR^{2d}$ defined as $S(x,x',y',y) := \l(x, T(x)\r)$, and consider the push-forward of measure $\Gamma_h$ through $S$
\[        S_{\#} \Gamma_h = (id,T)_{\#} \big( (\pi_{1})_{\#} \Gamma_h \big) = (id,T)_{\#} \mu = \gamma.
\]
Since $(\pi_{2,3})_{\#} {\Gamma}_h = \gamma_h$ according to \eqref{eq:Gamma_h}, we infer that the map $\widetilde{S}:=(S, \pi_{2,3} \big):\mR^{4d} \to \mR^{4d}$ defined by
\[
\widetilde{S}(x,x',y',y) = (z_1,z_2),
\quad
z_1 = (x,T(x)),
\quad
z_2 = (x',y')
\]
induces a push-forward measure $\wt{\Gamma}_h := \widetilde{S}_{\#} \Gamma_h \in \Pi(\gamma,\gamma_h)$ of $\Gamma_h$ through $\widetilde{S}$ that happens to be a transport plan between $\gamma$ and $\gamma_h$. This implies
	\begin{align*}
	W_2(\gamma, \gamma_h) &\le \l( \int_{\mR^{4d}} |z_1 - z_2|^2 \; d\wt{\Gamma}_h(z_1, z_2) \r)^{\frac{1}{2}} \\
	&= \l( \int_{\mR^{4d}} \big| S(x,x',y',y) - \pi_{2,3}(x,x',y',y)  \big|^2 \; d{\Gamma}_h(x,x',y',y) \r)^{\frac{1}{2}} \\
	&= \l( \int_{\mR^{4d}} \big( |x-x'|^2 + |T(x) - y'|^2 \big) \; d{\Gamma}_h(x,x',y',y) \r)^{\frac{1}{2}}.
	\end{align*}

Using the triangle inequality along with \eqref{eq:Gammah-int} yields
\begin{align*}
W_2(\gamma, \gamma_h) &\le \l( \int_{\mR^{4d}} |x-x'|^2 d{\Gamma}_h(x,x',y',y) \r)^{\frac{1}{2}}
\\ & + \l( \int_{\mR^{4d}} |y' - y|^2 \ d{\Gamma}_h(x,x',y',y) \r)^{\frac{1}{2}}
\\ & + \l( \int_{\mR^{4d}} |T(x) - y|^2 \ d{\Gamma}_h(x,x',y',y) \r)^{\frac{1}{2}}
\\ & = W_2(\mu,\mu_h) + W_2(\nu,\nu_h) + \l( \int_{\mR^{4d}} |T(x) - y|^2 \ d{\Gamma}_h(x,x',y',y) \r)^{\frac{1}{2}},
\end{align*}
because $\alpha_h$ and $\beta_h$ are optimal transport plans.
For the last term we recall that $\sigma=(\pi_{1,4})_\#\Gamma_h$ and employ \eqref{eq:Gammah-int} again, together with \eqref{eq:OT_stability2-1}, to arrive at
\begin{align*}
\l( \int_{\mR^{4d}} |T(x) - y|^2 \ d{\Gamma}_h(x,x',y',y) \r)^{\frac{1}{2}}
& = \l( \int_{\mR^{2d}} |T(x) - y|^2 d\sigma(x,y) \r)^{\frac{1}{2}}
\\ & \le 
	2 \lambda^{\frac12} \ \wt{e}_h^{\frac{1}{2}} \Big( W_2(\mu,\nu) + \wt{e}_h \Big)^{\frac{1}{2}}.
\end{align*}
This finishes the proof of the corollary.
\end{proof}

In the next two sections we explain how to use the preceding stability estimates to obtain some useful error estimates for numerical solutions of OT between the probability measures $\mu,\nu$ with densities $f,g$, supported in $\overline{X}, \overline{Y} \subset \mRd$, respectively, 
and quadratic cost $c(x,y) = |x-y|^2$.

\section{Error Estimates for Semi-Discrete Schemes} \label{S:EE-semi}
In this section, we present and prove error estimates for semi-discrete schemes such as those in \cite{Aurenhammer-Minkowski:98, Kitagawa-Convergence:16, Levy-Numerical:15, Merigot-Multiscale:11}. We conclude our discussion with comparisons between our results and those of Berman \cite{Berman-Convergence:18}.

Let $\mu_h = \sum_{i=1}^N f_i \delta_{x_i}$ ($f_i > 0$) be an approximation of $\mu$ satisfying $W_2(\mu, \mu_h) \le h$. One way to obtain such an approximation is explained in \Cref{S:Numerical-OT}. The semi-discrete schemes are designed to compute the optimal transport plan between the {\it discrete} measure $\mu_h$ and the {\it continuous} measure $\nu$. The semi-discrete problem is equivalent to finding a nodal function $\varphi_h := \big(\varphi_h(x_i)\big)_{i=1}^N \in \mR^N$ so that
\begin{equation}\label{eq:semi-discrete-psi}
\nu\l(F_i \r) = f_i, \quad F_i := \partial \varphi_h(x_i) \cap Y \qquad i = 1, \cdots, N,
\end{equation}
where $Y = \{y \in \mRd: g(y) > 0 \}$ and the discrete subdifferential set is given by
\[
\partial \varphi_h(x_i) := \{y \in \mRd: \quad \varphi_h(x_i) + (x_j - x_i) \cdot y \le \varphi_h(x_j) \quad \forall j = 1, \cdots, N\}.
\]
The convex sets $F_i$ are called {\it Laguerre cells (or generalized Voronoi cells)}.
We do not discuss methods to determine $\varphi_h$ for which we refer to  \cite{Aurenhammer-Minkowski:98, Kitagawa-Convergence:16, Levy-Numerical:15, Merigot-Multiscale:11}.
As pointed out in \cite[Lemma 5.3]{Berman-Convergence:18}, vector $\varphi_h \in \mR^N$ is unique up to a constant if $Y$ is a Lipschitz domain, which is a direct consequence of results in \cite{Kitagawa-Convergence:16}. Once we have found $\varphi_h$, the optimal transport map from $\nu$ to $\mu_h$ is given by the map $S_h$
\[
S_h(y) := x_i \quad  \textrm{a.e. } y \in F_i,
\]
and the induced optimal transport plan $\gamma_h \in P(X \times Y)$ reads $\gamma_h = (S_h, id)_{\#} \nu$ and
\begin{equation}\label{semi-discrete-opt-plan}
\gamma_h(E) = \nu \big\{y\in Y: \big( S_h(y),y)\in E  \big\}
\quad\Rightarrow\quad
\gamma_h\big(\{x_i\} \times F_i  \big) = \nu(F_i)
\end{equation}
for all Borel sets $E \subset X\times Y$.
We also point out that
there is no optimal map from $\mu_h$ to $\nu$ since every node $x_i$ is associated with the Laguerre cell $F_i$, which has infinite number of points (multivalued function). We are now ready to estimate the difference of $\gamma_h$ and the continuous optimal map $T$.

\begin{Theorem}[convergence rate for semi-discrete schemes]\label{th:EE-Semi}
Let the triple $(\mu,\nu,\varphi)$ be $\lambda-$regular for $\lambda > 0$ where the measures $\mu$ to $\nu$ have non-negative densities $f$ and $g$. Let $T = \nabla \varphi$ be the optimal transport map from $\mu$ to $\nu$. Let $\mu_h = \sum_{i=1}^N f_i \delta_{x_i}$ be an approximation of $\mu$ with $f_i > 0$. If $\varphi_h$ is a nodal function that solves \eqref{eq:semi-discrete-psi} and $\gamma_h$ is the corresponding semi-discrete optimal plan \eqref{semi-discrete-opt-plan}, then we have
	\begin{equation}\label{eq:EE-Semi-1}
	\int_{X\times Y} |T(x) - y|^2 d\gamma_h(x,y) = \sum_{i=1}^N \int_{F_i} |T(x_i) - y|^2 g(y)dy  \le E^2_h,
	\end{equation}
	where
	\begin{equation}\label{eq:OT-SemiError-Eh}
	E_h := 2 \lambda^{\frac12} \ W_2(\mu, \mu_h)^{\frac{1}{2}} \Big( W_2(\mu,\nu) + W_2(\mu, \mu_h) \Big)^{\frac{1}{2}} + \lambda W_2(\mu, \mu_h). 
	\end{equation}
Moreover, if $\mu_h$ satisfies $W_2(\mu, \mu_h) \le h$, then there exists a constant $C$ depending on $(\mu, \nu)$ such that for $h \le \lambda^{-1}$ we have
	\begin{equation}\label{eq:EE-Semi-h1}
	\int_{X \times Y} |T(x) - y|^2 d\gamma_h(x,y) = \sum_{i=1}^N \int_{F_i} |T(x_i) - y|^2 g(y)dy  \le C\lambda h.
	\end{equation}
\end{Theorem}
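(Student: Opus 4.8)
The plan is to obtain both assertions as a direct specialization of \Cref{th:OT_stability} (perturbation of optimal transport plans) to the semi-discrete situation, where $\nu_h=\nu$. The first step is the pointwise identity in \eqref{eq:EE-Semi-1}: since $\gamma_h=(S_h,id)_\#\nu$, the pushforward formula gives $\int_{X\times Y}|T(x)-y|^2\,d\gamma_h(x,y)=\int_Y|T(S_h(y))-y|^2\,d\nu(y)$, and splitting $Y$ (up to a Lebesgue-null set) into the Laguerre cells $F_i$, on which $S_h\equiv x_i$, together with $d\nu=g\,dy$, yields $\sum_{i=1}^N\int_{F_i}|T(x_i)-y|^2 g(y)\,dy$.

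Next I would invoke \Cref{th:OT_stability} with the given $\mu_h$, with $\nu_h:=\nu$, with $\alpha_h\in\Pi(\mu,\mu_h)$ the optimal plan (so that $e_{\alpha_h}=W_2(\mu,\mu_h)$), and with $\beta_h:=(id,id)_\#\nu\in\Pi(\nu,\nu)$ the diagonal coupling (so that $e_{\beta_h}=0$ and hence $e_h=W_2(\mu,\mu_h)$); note $\mu_h,\nu\in P_2(\mathbb{R}^d)$ since both are compactly supported. As recalled before the theorem, $S_h$ is the optimal transport map from $\nu$ to $\mu_h$, so $\gamma_h$ is an optimal transport plan between $\mu_h$ and $\nu_h=\nu$, and \eqref{eq:OT_stability2} applies and reads
\[
\Big(\int_{X\times Y}|T(x)-y|^2\,d\gamma_h(x,y)\Big)^{1/2}\le 2\lambda^{1/2}W_2(\mu,\mu_h)^{1/2}\big(W_2(\mu,\nu)+W_2(\mu,\mu_h)\big)^{1/2}+\lambda W_2(\mu,\mu_h)=E_h .
\]
Squaring gives \eqref{eq:EE-Semi-1} with $E_h$ as in \eqref{eq:OT-SemiError-Eh}.

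For \eqref{eq:EE-Semi-h1} I would use that $E_h$ is increasing in $W_2(\mu,\mu_h)$, so the bound $W_2(\mu,\mu_h)\le h$ gives $E_h\le 2\lambda^{1/2}h^{1/2}(W_2(\mu,\nu)+h)^{1/2}+\lambda h$. Then $h\le\lambda^{-1}$ is used twice: as $\lambda h=(\lambda h)^{1/2}(\lambda h)^{1/2}\le(\lambda h)^{1/2}=\lambda^{1/2}h^{1/2}$, and as $W_2(\mu,\nu)+h\le W_2(\mu,\nu)+\lambda^{-1}$, giving $E_h\le\lambda^{1/2}h^{1/2}\big(2(W_2(\mu,\nu)+\lambda^{-1})^{1/2}+1\big)$ and hence $E_h^2\le C\lambda h$ with $C=\big(2(W_2(\mu,\nu)+\lambda^{-1})^{1/2}+1\big)^2$, which depends only on $(\mu,\nu)$ through $W_2(\mu,\nu)$ and $\lambda$.

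I do not anticipate a genuine obstacle: the argument is essentially bookkeeping on top of \Cref{th:OT_stability}. The only points needing care are confirming that $\gamma_h$ is truly the \emph{optimal} plan between $\mu_h$ and $\nu$ (so one may invoke \Cref{th:OT_stability} directly rather than the weaker \Cref{cor:OT_stability}), checking the integrability hypotheses $\mu_h,\nu\in P_2(\mathbb{R}^d)$, and arranging the final elementary estimate so that the constant $C$ is manifestly a function of $(\mu,\nu)$ alone.
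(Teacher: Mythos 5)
Your proof is correct and follows the same route as the paper: the equality by the pushforward formula over the Laguerre cells, then \Cref{th:OT_stability} with $\nu_h=\nu$, $\beta_h=(id,id)_\#\nu$, and $\alpha_h$ optimal so $e_h=e_{\alpha_h}=W_2(\mu,\mu_h)$, $e_{\beta_h}=0$. Your final bookkeeping to get $E_h^2\le C\lambda h$ from $h\le\lambda^{-1}$ is merely spelled out in more detail than the paper's ``follows directly'' but is the intended calculation.
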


\begin{proof}
  The first equality in \eqref{eq:EE-Semi-1} follows from the relation between $\gamma_h=(S_h,id)_\#\nu$ and $\varphi_h$ expressed in \eqref{eq:semi-discrete-psi} and
    \eqref{semi-discrete-opt-plan}
	\begin{equation*}
	\begin{aligned}
	\int_{X \times Y} |T(x) &- y|^2 d\gamma_h(x,y)
	= \int_{Y} |T\l(S_h(y)\r) - y|^2 d\nu(y) \\
	&= \sum_{i=1}^N \int_{F_i} |T\l(S_h(y)\r) - y|^2 d\nu(y) 
	= \sum_{i=1}^N \int_{F_i} |T(x_i) - y|^2 g(y)dy.
	\end{aligned}
	\end{equation*}
	Now we resort to \Cref{th:OT_stability} (perturbation of optimal transport plans). Since there is no discretization of $\nu$, we let $\beta_h = (id,id)_{\#} \nu$ and $\alpha_h$ be the optimal transport plan between $\mu$ and $\mu_h$. Then \eqref{eq:OT_stability2} in \Cref{th:OT_stability} implies that
	\begin{equation*}
	\int_{X \times Y} |T(x') - y'|^2 d\gamma_h(x',y') \le \l( 2 \, \lambda^{\frac12} \, e_h^{\frac{1}{2}} \Big( W_2(\mu,\nu) + e_h \Big)^{\frac{1}{2}} + \lambda e_{\alpha_h} + e_{\beta_h} \r)^2,
	\end{equation*}
	with $e_h=e_{\alpha_h} = W_2(\mu, \mu_h)$ and $e_{\beta_h} = 0$. This proves \eqref{eq:EE-Semi-1}, whereas \eqref{eq:EE-Semi-h1} follows directly from $W_2(\mu,\mu_h) \le h$  and $h\le\lambda^{-1}$.
\end{proof}

We recall that the optimal transport plan $\alpha_h=(id,U_h)_{\#\mu}$ between $\mu$ and $\mu_h$ defined before \eqref{map-mu-muh} satisfies $W_2(\mu, \mu_h) \le h$ according to \eqref{map-mu-muh}. In addition, 
  our estimates \eqref{eq:EE-Semi-1} and \eqref{eq:EE-Semi-h1} contain geometric information about the Laguerre cells $F_i$ provided $g$ is strictly positive. To see this, for each $1\le i\le N$ we introduce the {\it center of mass (or barycenter)} $m_i$ of $F_i$ with respect to the measure $\nu$
\begin{equation}\label{eq:barycenter}
  m_i := \frac{1}{f_i} \int_{F_i} y \, g(y) \, dy.
\end{equation}

\begin{Corollary}[convergence rate for center of mass and Laguerre cells]\label{C:OT-EE-Semi}
We make the same hypotheses as in \Cref{th:EE-Semi}, and further assume that $g(y) \in [c_1, c_2]$ for all $y \in Y$ for some $c_2 \ge c_1 > 0$ and that $Y$ is a bounded convex set. Then there exists a constant $C$ depending on $d$ and $c_1, c_2$ such that
	\begin{equation}\label{eq:EE-Semi-cor1}
	\sum_{i=1}^N f_i \Big( |T(x_i) - m_i|^2 + C \diam( F_i)^2 \Big) \le E^2_h,
	\end{equation}
where $F_i, m_i$ and $E_h$ are defined in \eqref{eq:semi-discrete-psi}, \eqref{eq:barycenter}, and \eqref{eq:OT-SemiError-Eh}, respectively. Moreover, if $\mu_h$ satisfies $W_2(\mu, \mu_h) \le h$, then there exists a constant $C$ depending on $(d, \mu, \nu, c_1, c_2)$ such that for $h \le \lambda^{-1}$ we have
	\begin{equation}\label{eq:EE-Semi-cor2}
	\sum_{i=1}^N f_i \; |T(x_i) - m_i|^2 \le C\lambda h, \quad 
	\sum_{i=1}^N f_i \; \diam\l( F_i \r)^2 \le C\lambda h.
	\end{equation}
\end{Corollary}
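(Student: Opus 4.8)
The plan is to localize the global bound $\sum_{i=1}^N\int_{F_i}|T(x_i)-y|^2 g(y)\,dy\le E_h^2$ from \Cref{th:EE-Semi} to each Laguerre cell and then unpack the two pieces it contains. Since $\varphi_h$ solves \eqref{eq:semi-discrete-psi} we have $\int_{F_i}g\,dy=f_i$, and the definition \eqref{eq:barycenter} of $m_i$ gives $\int_{F_i}(y-m_i)g(y)\,dy=0$. Expanding $|T(x_i)-y|^2=|T(x_i)-m_i|^2+2\langle T(x_i)-m_i,m_i-y\rangle+|m_i-y|^2$ and integrating against $g\,dy$ over $F_i$, the middle term drops out and we obtain the exact identity
\[
\int_{F_i}|T(x_i)-y|^2 g(y)\,dy=f_i\,|T(x_i)-m_i|^2+\int_{F_i}|y-m_i|^2 g(y)\,dy .
\]
Summing over $i$ and invoking \eqref{eq:EE-Semi-1}, it then remains only to bound the last integral below by a constant times $f_i\,\diam(F_i)^2$; granted that, \eqref{eq:EE-Semi-cor1} follows at once, and \eqref{eq:EE-Semi-cor2} follows from it by discarding either of the two nonnegative terms and inserting \eqref{eq:EE-Semi-h1} (valid when $h\le\lambda^{-1}$).

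For the lower bound on $\int_{F_i}|y-m_i|^2 g\,dy$ I would first use $g\ge c_1$, and then replace $m_i$ by the Lebesgue barycenter $b_{F_i}:=|F_i|^{-1}\int_{F_i}y\,dy$, which minimizes $m\mapsto\int_{F_i}|y-m|^2\,dy$, so that
\[
\int_{F_i}|y-m_i|^2 g(y)\,dy\ \ge\ c_1\int_{F_i}|y-b_{F_i}|^2\,dy .
\]
Here $F_i=\partial\varphi_h(x_i)\cap Y$ is a bounded convex set (an intersection of half-spaces with the bounded convex set $Y$) with $|F_i|>0$, because $f_i=\int_{F_i}g>0$ and $g\le c_2$; hence it is a genuine convex body. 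The key ingredient is then the geometric fact that there is $c_d>0$ with
\[
\int_K|y-b_K|^2\,dy\ \ge\ c_d\,|K|\,\diam(K)^2\qquad\text{for every convex body }K\subset\mRd .
\]
Granting this and using $f_i\le c_2|F_i|$, we get $\int_{F_i}|y-m_i|^2 g\,dy\ge (c_1 c_d/c_2)\,f_i\,\diam(F_i)^2$, which is exactly \eqref{eq:EE-Semi-cor1} with $C=c_1 c_d/c_2$, as prescribed.

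I would establish the geometric fact by reduction to one dimension. Let $D=\diam(K)$ be attained at $p,q\in\overline K$ and $e=(q-p)/D$; the orthogonal projection of $K$ onto $\mR e$ is an interval $I$ of length $\ell\ge D$, and with $A(t):=\mathcal H^{d-1}(K\cap\{\langle x,e\rangle=t\})$ the Brunn--Minkowski inequality gives that $A^{1/(d-1)}$ is concave on $I$. Since $|y-b_K|^2\ge\langle y-b_K,e\rangle^2$ and the $e$-mean of the uniform measure on $K$ equals $\langle b_K,e\rangle$, we get $\int_K|y-b_K|^2\,dy\ge\int_I(t-\bar t)^2 A(t)\,dt=|K|\,\mathrm{Var}(\rho)$, where $\rho$ is the normalized one-dimensional marginal and $\bar t$ its mean. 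It thus suffices to show that a probability density of the form $\phi^{d-1}$ with $\phi\ge0$ concave on an interval of length $\ell$ has variance at least $c_d\,\ell^2\ (\ge c_d\,D^2)$. After rescaling $I$ to $[0,1]$, concavity forces $\phi(t)\ge(\max\phi)\,\min(t,1-t)$; using this to bound below the $\rho$-mass of the quarter of $[0,1]$ farthest from $\bar t$ yields $\mathrm{Var}(\rho)\ge c_d$ with an explicit dimensional constant. I expect this one-dimensional concavity estimate (equivalently, the convex-body spread lemma above) to be the only genuinely nontrivial point; everything else is the bookkeeping with the identity of the first paragraph together with the already-established \Cref{th:EE-Semi}. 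Alternatively, one may quote John's ellipsoid theorem to obtain the same geometric bound directly.
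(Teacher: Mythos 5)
Your argument is correct and follows the paper's proof almost step-by-step: the orthogonality decomposition $\int_{F_i}|T(x_i)-y|^2 g\,dy = f_i|T(x_i)-m_i|^2 + \int_{F_i}|y-m_i|^2 g\,dy$, the reduction $g\ge c_1$, replacement of $m_i$ by the Lebesgue barycenter (which minimizes the second moment), and the passage from $|F_i|$ to $f_i$ via $f_i\le c_2|F_i|$ are all exactly the paper's steps, culminating in the same application of \Cref{th:EE-Semi}. The one place you diverge is the geometric lemma $\int_K|y-b_K|^2\,dy\ge c_d|K|\,\diam(K)^2$ for a convex body $K$. The paper quotes John's ellipsoid theorem (\cite[Theorem 1.8.2]{Gutierrez-MA:2016}) to sandwich $F_i$ between concentric ellipsoids and then verifies the corresponding second-moment bound on an ellipsoid by a direct computation; you instead project $K$ onto its diameter direction, invoke Brunn--Minkowski to get a $1/(d-1)$-concave one-dimensional marginal, and bound its variance below from the structure of concave functions. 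Both routes are standard in convex geometry and deliver the same dimensional constant up to factors; yours is slightly more self-contained, while the paper's reuses a result already needed elsewhere in the \MA literature it cites. Since you also flag John's ellipsoid as an alternative, the two proofs are really the same modulo which textbook fact is black-boxed.
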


\begin{proof}
  Using the facts that $f_i = \nu(F_i) = \int_{F_i} g(y) dy$ and $\int_{F_i} (m_i - y) g(y)dy = 0$, which are valid due to \eqref{eq:semi-discrete-psi} and \eqref{eq:barycenter}, we infer that
\[  
\int_{F_i} \langle T(x_i) - m_i, m_i - y \rangle  g(y)dy = 0,
\]
whence
\begin{equation*}
	\begin{aligned}
	\int_{F_i} |T(x_i) - y|^2 g(y)dy 
	&= \int_{F_i} \Big( |T(x_i) - m_i|^2  + |m_i - y|^2 \Big) g(y)dy\\
	&= f_i|T(x_i) - m_i|^2 + \int_{F_i} |m_i - y|^2 g(y)dy.
	\end{aligned}
	\end{equation*}
	Therefore to prove \eqref{eq:EE-Semi-cor1} it remains to show that there exists a constant $C$ such that
	$$
	C f_i \; \diam\l( F_i \r)^2  \le \int_{F_i} |m_i - y|^2 g(y)dy.
	$$
	Notice that $F_i = \partial \varphi_h(x_i) \cap Y$ is convex because both $ \partial \varphi_h(x_i)$ and $Y$ are convex. Let
	$\wt{m}_i := |F_i|^{-1} \int_{F_i} y dy$ be the center of mass of $F_i$ with respect to the Lebesgue measure, and apply \cite[Theorem 1.8.2]{Gutierrez-MA:2016} to deduce the existence of an ellipsoid $E$ centered at $\wt{m}_i$ such that $d^{-3/2} E \subset F_i \subset E$, where $\alpha E$ denotes the $\alpha$-dilation of $E$ with respect to $\wt{m}_i$. Then we have
	$$
	\begin{aligned}
	\int_{F_i} |m_i -y|^2 g(y)dy 
	&\ge c_1 \int_{F_i} |m_i -y|^2 dy 
	\ge c_1 \int_{F_i} |\wt{m}_i -y|^2 dy \\
	& \ge c_1 \int_{d^{-3/2} E} |\wt{m}_i -y|^2 dy \ge
	c_d c_1 |E| \; \diam(E)^2,
	\end{aligned}
	$$
	where $c_d$ is a constant depending on $d$. In fact, we will prove below that for any ellipsoid $E_0$ centered at $z$, we have
        \begin{equation}\label{E:ellipsoid}
	\int_{E_0} |y-z|^2 dy \ge c'_d |E_0| \; \diam(E_0)^2,
	\end{equation}
	where $c'_d$ is a constant depending on $d$. We finally utilize the properties
	$$
	f_i = \int_{F_i} g(y)dy \le c_2 |E|,
	$$
	and $\mbox{diam}(E) \ge \mbox{diam}(F_i)$ because $F_i \subset E$, to obtain
	$$
	c_d c_1 |E| \; \diam(E)^2 \ge \frac{c_d c_1}{c_2} f_i \diam(F_i)^2.
	$$
	This finishes our proof of \eqref{eq:EE-Semi-cor1} by choosing $C = c_d c_1 c_2^{-1}$. Finally, \eqref{eq:EE-Semi-cor2} is a direct consequence of \eqref{eq:EE-Semi-cor1} because $E_h^2 \le C\lambda h$ whenever $W_2(\mu,\mu_h) \le h$ and $h\le \lambda^{-1}$.

It remains to show the geometric property \eqref{E:ellipsoid}. Let $E_0$ be given by
\[
E_0 = \Big\{y\in\mathbb{R}^d: \quad \sum_{j=1}^d  \frac{x_i^2}{a_i^2} \le 1 \Big\}
\]
and $a_1 = \max_{1\le j \le d} a_j$; hence $\diam(E_0)=2a_1$ and the barycenter $z$ of $E_0$ coincides with the origin. We consider now the subset of $E_0$
\[
\wt{E}_0 = \Big\{ y\in E_0: \quad \frac{1}{3} a_1 \le y_1 \le \frac{2}{3} a_1  \Big\},
\]
which satisfies
\[
\wt{E}_0 \supset \Big\{y_1 \in\mathbb{R}: \quad \frac{1}{3} a_1 \le y_1 \le \frac{2}{3} a_1  \Big\} \times
\Big\{(y_j)_{j=2}^d: \quad \sum_{j=2}^d \frac{y_j^2}{a_j^2} \le \frac{5}{9}  \Big\} = \wt{E}_1.
\]
We see that $E_0 \subset E_1 = [-a_1,a_1] \times \big\{(y_j)_{j=2}^d: \sum_{j=2}^d \frac{y_j^2}{a_j^2} \le 1 \big\}$, whence $|E_0| < |E_1| \le c_d |\wt{E}_0|$, the last inequality being a consequence of a scaling argument between $E_1$ and $\wt{E}_1$. Since $|y|\ge \frac{a_1}{3} = \frac{1}{6} \diam (E_0)$ in $\wt{E}_0$, we infer that
\[
\int_{E_0} |y|^2 \, dy \ge \int_{\wt{E}_0} |y|^2 \, dy
\ge \frac{1}{36} \diam(E_0)^2 |\wt{E}_0| \ge c'_d \diam(E_0)^2 |E_0|.
\]
This shows \eqref{E:ellipsoid} and concludes the proof.
\end{proof}

\begin{remark}[discrete transport map]
To compensate for not having an optimal transport map from $\mu_h$ to $\nu$, because $(x_i\mapsto F_i)_{i=1}^N$ is a multivalued function, we could define the discrete transport map $T_h$ to be
\[
T_h (x_i) = m_i \quad\forall \, i=1, \cdots, N.
\]
Expression \eqref{eq:EE-Semi-cor2} immediately yields the weighted $L^2$ error estimate
\[
\sum_{i=1}^N f_i \big| T(x_i) - T_h(x_i) \big|^2 \le C \lambda h.
\]
\end{remark}  

\begin{remark}[mean errors]
	We point out that since $\sum_{i=1}^N f_i = 1$, \eqref{eq:EE-Semi-cor2} shows
	that on average the pointwise error $|T(x_i) - m_i|^2$ and $\diam(F_i)^2$ are of order $O(h)$. 	
\end{remark}

\begin{remark}[approximate transport plans]
In practice, the semi-discrete schemes may not solve \eqref{eq:semi-discrete-psi} exactly but rather approximately: let $\wt \varphi_h$ solve
$$
\nu(\wt{F}_i) = \wt{f}_i, \quad \wt{F_i}:=\partial \wt\varphi_h(x_i) \cap Y
\qquad i = 1, \cdots, N,
$$
with $\wt f_i$ close to $f_i$, and induce an optimal plan $\wt{\gamma}_h$ between $\wt{\mu}_h = \sum_{i=1}^N \wt{f}_i \delta_{x_i}$ and $\nu$. We note that the error estimate \eqref{eq:EE-Semi-h1} in \Cref{th:EE-Semi} (convergence rate for semi-discete schemes) only requires $W_2(\mu, \wt{\mu}_h) \le Ch$, and thus deduce that as long as the approximate measure $\wt{\mu}_h$ satisfies
$W_2(\mu_h, \wt{\mu}_h) \le Ch$, we are still able to obtain
$$
\sum_{i=1}^N \int_{\wt{F}_i}|T(x_i) - y|^2 g(y)dy  \le Ch,
$$
because $W_2(\mu, \wt{\mu}_h) \le W_2(\mu, \mu_h)  + W_2(\mu_h, \wt{\mu}_h) \le Ch$. This shows that enforcing \eqref{eq:semi-discrete-psi} exactly may not be computationally profitable.
\end{remark}

We conclude this section comparing our results of \Cref{th:EE-Semi} and \Cref{C:OT-EE-Semi} with that of Berman \cite[Theorem 5.4]{Berman-Convergence:18}. The nodal function $\varphi_h=(\varphi_h(x_i))_{i=1}^N$ that satisfies \eqref{eq:semi-discrete-psi} also induces the convex function $\Gamma(\varphi_h)$, in fact the convex envelope of $(\varphi_h(x_i))_{i=1}^N$. For convenience we still denote $\Gamma(\varphi_h)$ by $\varphi_h$ and observe that, being piecewise linear, it dictates a partition $\mathcal{T}_h$ of $X$ into simplices with vertices $(x_i)_{i=1}^N$. However, this partition $\mathcal{T}_h$ of $X$ might not be shape-regular in general even for a quasi-uniform distribution of nodes $(x_i)_{i=1}^N$; we refer to \cite[Section 2.2]{NochZhang-MA:2016}. Therefore, there is no direct relation between elements $K_j$ of $\mathcal{T}_h$ containing $x_i$ and the Voronoi cells $V_i$ in \eqref{eq:Voronoi}. By assuming that $\lambda^{-1} I \le D^2 \varphi \le \lambda I$ for some $\lambda > 0$ and the density $g$ satisfies $g(y) \ge c_1 > 0$ for all $y\in Y$, Berman obtains the following error estimate for the piecewise constant function $\nabla\varphi_h$ over $\mathcal{T}_h$ \cite[Theorem 5.4]{Berman-Convergence:18}
\begin{equation}\label{eq:Berman-EE}
\|\nabla\varphi - \nabla\varphi_h\|_{L^2(X)} \le C h^{\frac12},
\end{equation}
where the constant $C$ depends on $\lambda, c_1$ and other information. We see that this rate of convergence is similar to those in \Cref{th:EE-Semi} and \Cref{C:OT-EE-Semi} but the error notion is different. To explore this fact, we rewrite \eqref{eq:Berman-EE} as follows
\[
\sum_{K_j \in \mathcal{T}_h} \int_{K_j} |\nabla\varphi(x) - \nabla\varphi_h(x) |^2 dx \le Ch,
\]
Since the Hessian $D^2 \varphi$ is uniformly bounded both from above and below, we infer that $\nabla \varphi$ and $(\nabla \varphi)^{-1}$ are both Lipschitz with Lipschitz constants proportional to $\lambda$ and $\lambda^{-1}$. Therefore the previous inequality is equivalent to
\[
\sum_{K_j \in \mathcal{T}_h} \int_{K_j} |x - (\nabla \varphi)^{-1} \nabla\varphi_h(x) |^2 dx \le C \lambda^2 h.
\]
Let $z_j = |K_j|^{-1} \int_{K_j} x dx$ be the barycenter of $K_j$ with respect to the Lebesgue measure. An argument similar to the proof of \Cref{C:OT-EE-Semi} yields
\[
|K_j| \, |z_j - (\nabla \varphi)^{-1}y_j|^2 + |K_j| \diam(K_j)^2 \lesssim
\int_{K_j} |x - (\nabla \varphi)^{-1} \nabla\varphi_h(x) |^2 dx
\]
where $y_j = \nabla\varphi_h(x)$ is the constant slope for $x \in K_j$. Exploiting again that $T=\nabla\varphi$ is Lipschitz, we see that \eqref{eq:Berman-EE} leads to
\begin{equation}\label{eq:Berman-EE2}
  \sum_{K_j \in \mathcal{T}_h} |K_j| \, \big|T(z_j) - y_j\big|^2 \le C\lambda^4 h,
  \quad
  \sum_{K_j \in \mathcal{T}_h} |K_j| \diam(K_j)^2 \le C \lambda^2 h.
\end{equation}
This looks similar to \eqref{eq:EE-Semi-cor2} but involving simplices $K_j$ instead of Laguerre cells $F_i$; \eqref{eq:EE-Semi-cor2} and \eqref{eq:Berman-EE2} are, however, intrinsically different. Several comments are in order.

\begin{enumerate}[$\bullet$]
\item
The Laguerre cell $F_i$ is the convex hull of all vectors $y_j=\nabla\varphi|_{K_j}$ where $K_j\in\mathcal{T}_h$ are the simplices containing $x_i$, the so-called star (or patch) $\omega_i$ \cite[Section 5.3]{NochettoZhang:2018}. Therefore $m_i$ in \eqref{eq:EE-Semi-cor2} can be replaced by any $y_j$ for $K_j\subset\omega_i$ because of the occurrence of $\diam (F_i)$ in \eqref{eq:EE-Semi-cor2}. However, there is no immediate relation between $f_i=\nu(F_i)$ and $|\omega_i|$ or $\mu(\omega)$.

\item
Both estimates \eqref{eq:EE-Semi-cor2} and \eqref{eq:Berman-EE2} require a lower positive bound for $g$. However, \eqref{eq:EE-Semi-h1} just entails $\lambda$-regularity of $(\mu,\nu,\varphi)$ but not a lower bound on either $g$ or $f$; note that the assumption $f_i>0$ in \Cref{th:EE-Semi} is for convenience because if $f_i=0$ we could simply drop the Dirac mass at $x_i$.

\item
The estimate \eqref{eq:EE-Semi-h1} applies to discrete transport plans and extends to fully discrete schemes; see Section \ref{S:EE-fully}. It is not clear what a fully discrete version of \eqref{eq:Berman-EE2} could be if the measure $\nu$ is further discretized into a sum of Dirac measures.

\item
The derivation of \Cref{th:EE-Semi} and \Cref{C:OT-EE-Semi} is purely analytical and hinges on the quantitative stability estimates of Gigli \cite{Gigli-Holder:11}. In contrast, the proof of \eqref{eq:Berman-EE2} in \cite{Berman-Convergence:18} uses a complexification argument to deduce estimates from well-known inequalities in K\"ahler geometry and pluripotential theory.

\item
Both \eqref{eq:EE-Semi-cor2} and \eqref{eq:Berman-EE2} are meaningful ways to measure the error between the continuous optimal map $T$ and the semi-discrete one $T_h$. Our error notion is natural in dealing with optimal transport maps and plans, while the error notion in \cite{Berman-Convergence:18} is perhaps more natural in the setting of \MA equations.
  
\end{enumerate}

\section{Error Estimates for Fully-Discrete Schemes} \label{S:EE-fully}

If we approximate both measures $\mu, \nu$ by discrete measures $\mu_h = \sum_{i=1}^N f_i \delta_{x_i}$ and $\nu_h = \sum_{j=1}^M g_j \delta_{y_j}$, respectively, then we can consider the following {\it linear programming problem} to approximate the optimal transport map $T$ \cite{Benamou-Iterative:15, Cuturi-Sinkhorn:13, Oberman-Efficient:15, Schmitzer-Sparse:16, Schmitzer-Hierarchical:13}: find the discrete measure $\gamma_h=\sum_{i=1}^N\sum_{j=1}^M\gamma_{h,ij} \delta_{x_i} \delta_{y_j}$ such that
\begin{equation}\label{eq:pb-fully-discrete2}
\min_{\gamma_{h,ij}} \; \sum_{i,j=1}^{N,M} \gamma_{h,ij} \, c_{ij} : \quad
\gamma_{h,ij} \ge 0, \;\; \sum_{i=1}^N \gamma_{h,ij} = g_{j}, \;\; \sum_{j=1}^M \gamma_{h,ij} = f_{i},
\end{equation}
where $c_{ij} = |x_i - y_j|^2$. One thing worth pointing out here is that the solution of the above problem may not be unique even though the continuous problem has a unique optimal map $T$ (and plan $\gamma$); see \Cref{rem:non-unique-discrete} (non-uniqueness). The minimum transport cost of \eqref{eq:pb-fully-discrete2} is equal to $W^2_2(\mu_h, \nu_h)$ according to the definition, but in practice we may not get an exact optimal plan $\gamma_h$, whence the cost  $\sum_{i,j} \gamma_{h,ij} c_{ij}$ might be larger than $W^2_2(\mu_h, \nu_h)$. 

Since the discrete plan $\gamma_h$ may not be induced by a map, i.e. for some $i$ there might exist more than one $j$ such that $\gamma_{h,ij} > 0$, one may need to define a map $T_h$ from $\gamma_h$ to approximate the optimal transport map $T$ between $\mu$ and $\nu$. One way is to use a barycentric projection \cite[Definition 5]{Oberman-Efficient:15} and define $T_h$ to be
\begin{equation}\label{eq:def-fully-T_h}
T_h(x_i) := \frac{1}{f_i} \sum_{j=1}^M \gamma_{h,ij} \, y_j.
\end{equation}
This is a discrete counterpart of \eqref{eq:barycenter}.
In general, the quantity $T_h(x_i)$ may not belong to the set $\{y_j: j=1,\cdots,M\}$. The following theorem quantifies the errors committed in approximating the optimal plan $\gamma$ by $\gamma_h$ and the optimal map $T$ by the map $T_h$ generated from $\gamma_h$.

\begin{Theorem}[convergence rate for fully-discrete schemes]\label{T:OT-EE-Fully}
Let the triple $(\mu,\nu,\varphi)$ be $\lambda-$regular for $\lambda > 0$ where the measures $\mu$ to $\nu$ have non-negative densities $f$ and $g$. Let $T = \nabla \varphi$ be the optimal transport map from $\mu$ to $\nu$. Let the approximations $\mu_h = \sum_{i=1}^N f_i \delta_{x_i}$, $\nu_h = \sum_{j=1}^M g_j \delta_{y_j}$ of $\mu, \nu$ satisfy $W_2(\mu,\mu_h), W_2(\nu,\nu_h) \le C_1h$. If $\gamma_h = \sum_{i,j=1}^{N,M}\gamma_{h,ij} \delta_{x_i} \delta_{y_j}$ $\in \Pi(\mu_h, \nu_h)$ is a discrete transport plan so that
    \[
    \l(\sum_{i,j=1}^{N,M} \gamma_{h,ij} \, c_{ij}\r)^{\frac12} - W_2(\mu_h, \nu_h) \le C_2 h,
    \]
then for $h \le \min\{\lambda,\lambda^{-1}\}$ we have
	\begin{equation}\label{eq:EE-Fully-gammah-1}
	\l( \int_{X \times Y} |T(x) - y|^2 d\gamma_h(x,y) \r)^{\frac{1}{2}} =  \l( \sum_{i,j=1}^{N,M} \gamma_{h,ij} \, |T(x_i) - y_j|^2 \r)^{\frac{1}{2}} \le C\lambda^{\frac12} h^{\frac12},    
	\end{equation}
	where $C$ is a constant depending on $(\mu, \nu, C_1, C_2)$. In addition, the Wasserstein distance between $\gamma$ and $\gamma_h$ satisfies
	\begin{equation}\label{eq:EE-Fully-gammah-2}
	W_2(\gamma, \gamma_h) \le C \lambda^{\frac12} h^{\frac12},
	\end{equation}
and the map $T_h$ generated by $\gamma_h$ according to \eqref{eq:def-fully-T_h} verifies
	\begin{equation}\label{eq:EE-Fully-Th}
	  \l( \sum_{i=1}^N f_i |T(x_i) - T_h(x_i)|^2 \r)^{\frac{1}{2}} \le C \lambda^{\frac12} h^{\frac12}.
	\end{equation}
\end{Theorem}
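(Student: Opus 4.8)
The plan is to read off \eqref{eq:EE-Fully-gammah-1} and \eqref{eq:EE-Fully-gammah-2} directly from \Cref{cor:OT_stability} (perturbation in $L^2$) and \Cref{cor:Wass_prod_space} (perturbation in the Wasserstein metric), and then to deduce \eqref{eq:EE-Fully-Th} from \eqref{eq:EE-Fully-gammah-1} by a Jensen inequality applied to the barycentric projection \eqref{eq:def-fully-T_h}. First I would choose $\alpha_h\in\Pi(\mu,\mu_h)$ and $\beta_h\in\Pi(\nu_h,\nu)$ to be optimal transport plans, so that $e_{\alpha_h}=W_2(\mu,\mu_h)\le C_1 h$ and $e_{\beta_h}=W_2(\nu_h,\nu)\le C_1 h$, hence $e_h\le 2C_1 h$; the hypothesis $(\sum_{i,j}\gamma_{h,ij}c_{ij})^{1/2}-W_2(\mu_h,\nu_h)\le C_2 h$ is exactly the statement that the quantity $\ve_h$ of \eqref{eq:OT_stability_epsilon} satisfies $\ve_h\le C_2 h$, since $\int_{X\times Y}|x'-y'|^2d\gamma_h=\sum_{i,j}\gamma_{h,ij}c_{ij}$. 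Therefore $\wt e_h=e_h+\ve_h/2\le (2C_1+C_2/2)h=:C_3h$. Because $h\le\min\{\lambda,\lambda^{-1}\}$ we have $\lambda h\le 1$, $h\le\lambda$, and consequently $h^2\le\lambda h\le 1$, so $h\le 1$ and $\wt e_h\le C_3$.

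Next I would substitute these bounds into \eqref{eq:OT_stability2-2}. The leading term obeys $2\lambda^{1/2}\wt e_h^{1/2}(W_2(\mu,\nu)+\wt e_h)^{1/2}\le 2\lambda^{1/2}(C_3h)^{1/2}(W_2(\mu,\nu)+C_3)^{1/2}\le C\lambda^{1/2}h^{1/2}$ with $C=C(\mu,\nu,C_1,C_2)$. For the lower-order terms, $\lambda e_{\alpha_h}\le C_1\lambda h=C_1(\lambda h)^{1/2}(\lambda h)^{1/2}\le C_1(\lambda h)^{1/2}\le C_1\lambda^{1/2}h^{1/2}$ using $\lambda h\le 1$, while $e_{\beta_h}\le C_1 h=C_1 h^{1/2}h^{1/2}\le C_1\lambda^{1/2}h^{1/2}$ using $h\le\lambda$. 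Adding the three contributions gives \eqref{eq:EE-Fully-gammah-1}, the stated equality with $\sum_{i,j}\gamma_{h,ij}|T(x_i)-y_j|^2$ being merely the definition of integration against the discrete measure $\gamma_h=\sum_{i,j}\gamma_{h,ij}\delta_{x_i}\delta_{y_j}$. For \eqref{eq:EE-Fully-gammah-2} I would apply \Cref{cor:Wass_prod_space} with the same optimal $\alpha_h,\beta_h$; then $e_h=W_2(\mu,\mu_h)+W_2(\nu,\nu_h)\le 2C_1 h\le 2C_1\lambda^{1/2}h^{1/2}$ (again since $h\le\lambda$), and the leading term is controlled exactly as above.

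Finally, for \eqref{eq:EE-Fully-Th} I note that $\gamma_h\in\Pi(\mu_h,\nu_h)$ forces $\sum_{j=1}^M\gamma_{h,ij}=f_i$, so $\sum_{j}\gamma_{h,ij}/f_i=1$ and $T_h(x_i)$ is a convex combination of the points $\{y_j\}$. Jensen's inequality then yields $f_i|T(x_i)-T_h(x_i)|^2\le\sum_{j=1}^M\gamma_{h,ij}|T(x_i)-y_j|^2$; summing over $i$ and invoking \eqref{eq:EE-Fully-gammah-1} gives $\sum_i f_i|T(x_i)-T_h(x_i)|^2\le\int_{X\times Y}|T(x)-y|^2d\gamma_h\le C\lambda h$, and taking square roots completes the argument.

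\textbf{Main obstacle.} There is no serious analytic difficulty here: the statement is essentially a corollary of the quantitative stability machinery of Section~\ref{S:Q-Stability}, and the barycentric step is the standard Jensen argument. The only point requiring care is the bookkeeping that converts the lower-order contributions $\lambda e_{\alpha_h}\sim\lambda h$ and $e_{\beta_h}\sim h$ into the target rate $\lambda^{1/2}h^{1/2}$ — this is precisely why the hypothesis $h\le\min\{\lambda,\lambda^{-1}\}$ (rather than just $h\le\lambda^{-1}$) is imposed — together with tracking the dependence of the constant on $\mu,\nu,C_1,C_2$ consistently through all three estimates.
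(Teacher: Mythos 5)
Your proposal is correct and follows the paper's proof essentially step for step: choose optimal $\alpha_h,\beta_h$, bound $e_h$ and $\ve_h$ by $Ch$, apply \Cref{cor:OT_stability} and \Cref{cor:Wass_prod_space} with the hypothesis $h\le\min\{\lambda,\lambda^{-1}\}$ to absorb the lower-order terms $\lambda e_{\alpha_h}$ and $e_{\beta_h}$, and then control $T_h$ via the barycentric property. The only cosmetic difference is in the last step, where the paper writes out the Pythagorean orthogonality identity $\sum_{i,j}\gamma_{h,ij}|T(x_i)-y_j|^2=\sum_i f_i|T(x_i)-T_h(x_i)|^2+\sum_{i,j}\gamma_{h,ij}|T_h(x_i)-y_j|^2$ and drops the nonnegative remainder, whereas you invoke Jensen's inequality for $z\mapsto |T(x_i)-z|^2$ — these are the same calculation, the orthogonality identity being precisely what makes Jensen hold here.
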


\begin{proof}
  The equality in \eqref{eq:EE-Fully-gammah-1} follows from the definition of $\gamma_h=\sum_{i,j=1}^{N,M}\gamma_{h,ij} \delta_{x_i} \delta_{y_j}$. By letting $\alpha_h \in \Pi(\mu,\mu_h), \beta_h \in \Pi(\nu_h, \nu)$ be the corresponding optimal transport plans between $\mu,\mu_h$ and $\nu,\nu_h$, the inequality in \eqref{eq:EE-Fully-gammah-1} is a direct consequence of \eqref{eq:OT_stability2-2} in \Cref{cor:OT_stability} (perturbation of transport plans in $L^2$) because
\[
		e_h = W_2(\mu,\mu_h) + W_2(\nu_h,\nu) \le 2C_1 h, \quad
		\ve_h = \l(\sum_{i,j} \gamma_{h,ij} \, c_{ij}\r)^{\frac12} - W_2(\mu_h, \nu_h) \le C_2 h,
\]
and $h\le\min\{\lambda,\lambda^{-1}\}$.
Similary, \eqref{eq:EE-Fully-gammah-2} follows from \Cref{cor:Wass_prod_space} (perturbation of transport plans in Wasserstein metric). The proof of \eqref{eq:EE-Fully-Th} is a discrete version of \Cref{C:OT-EE-Semi} (convergence rate for center of mass and Laguerre cells). We first notice that \eqref{eq:pb-fully-discrete2}, together with the definition \eqref{eq:def-fully-T_h} of $T_h$, yields
	\[
	\sum_{j=1}^M \gamma_{h,ij} \big( T_h(x_i)-y_j \big) = f_i T_h(x_i) - \sum_{j=1}^M \gamma_{h,ij} \, y_j = 0  \quad \forall i = 1,\cdots,N.
	\]
	This implies the orthogonality relation
        \[
        \sum_{j=1}^M \gamma_{h,ij} \langle T(x_i) - T_h(x_i), T_h(x_i) - y_j \rangle = 0,
        \]
        whence
	\[\begin{aligned}
	\sum_{i=1}^N \sum_{j=1}^M \gamma_{h,ij} \big|T(x_i) - y_j\big|^2 
	= \sum_{i=1}^N \sum_{j=1}^M \gamma_{h,ij} \l( \big|T(x_i) - T_h(x_i)\big|^2 + \big|T_h(x_i) - y_j\big|^2 \r).
	\end{aligned}\]
        Finally, we combine this equality and \eqref{eq:EE-Fully-gammah-1} to arrive at
	\begin{align*}
	C \lambda h &\ge \sum_{i=1}^N \sum_{j=1}^M \gamma_{h,ij} \big|T(x_i) - y_j\big|^2 
	\\ &\ge \sum_{i=1}^N \sum_{j=1}^M \gamma_{h,ij} \big|T(x_i) - T_h(x_i)\big|^2 \\
	&= \sum_{i=1}^N f_i \big|T(x_i) - T_h(x_i)\big|^2.
	\end{align*}
	This shows \eqref{eq:EE-Fully-Th} and thus finishes the proof of this theorem.
\end{proof}

\Cref{T:OT-EE-Fully} (convergence rates for fully-discrete schemes) is the first result known to us which quantitatively measures the errors between $\gamma$ and $\gamma_h$ and between $T$ and $T_h$. The convergence rate $O(h^{1/2})$ coincides with that for semi-discrete schemes developed in \Cref{S:EE-semi} and first proved by Berman \cite{Berman-Convergence:18} for a different error notion. Moreover, \Cref{T:OT-EE-Fully} reveals that it is enough to solve the linear programming problem \eqref{eq:pb-fully-discrete2} approximately provided that the cost for $\gamma_h$ is within $O(h)$ from the optimal cost. Therefore, approximate techniques from \cite{Benamou-Iterative:15, Cuturi-Sinkhorn:13, Oberman-Efficient:15} are relevant in practice.

It is also worth pointing out that, according to \Cref{T:OT-EE-Fully}, although $\gamma_h$ is not generally sparse it must be close to the sparse plan $\wt\gamma_h := (S_h)_{\#\mu_h}$, the push-forward of $\mu_h$ by the map $S_h:=(id,T_h): \{x_i\}_{i=1}^N\subset X \to X\times Y$ and given by
\[  
\wt\gamma_h(A) = \mu_h\big(S_h^{-1}(A)\big)
= \sum_{(x_i,T_h(x_i)) \in A} f_i
\]
for all measurable sets $A \subset X\times Y$. We hope this observation might provide insight on acceleration processes for solving problem \eqref{eq:pb-fully-discrete2} that take advantage of sparsity of $\gamma_h$, as shown for instance in \cite{Oberman-Efficient:15}. 

\bibliographystyle{amsplain}
\bibliography{OT_v6}
\end{document}